\newtheorem{Theorem}{Theorem}
\newtheorem{Proposition}[Theorem]{Proposition}
\newtheorem{Lemma}[Theorem]{Lemma}
\newtheorem{Definition}{Definition}
\renewcommand{\subset}{\subseteq}			
\newcommand{\1}{\mathds 1}					
\newcommand{\Z}{\mathbb{Z}}					
\newcommand{\R}{\mathbb{R}}					
\newcommand{\Cx}{\mathbb{C}}				
\DeclareMathOperator{\SL}{SL}
\DeclareMathOperator{\SU}{SU}
\DeclareMathOperator{\End}{End}
\DeclareMathOperator{\Tan}{T}					
\newcommand{\rec}{\frac{1}}						
\newcommand{\inv}[2][1]{#2^{-#1}}					
\DeclareMathOperator{\ad}{ad}		
\newcommand{\contr}{\! \cdot \!}			
\NewDocumentCommand{\dert}{o m}{\dif \IfNoValueF{#1}{^{#1}}_{#2}}																						
\NewDocumentCommand{\derp}{o m}{\partial\IfNoValueF{#1}{^{#1}}_{#2}}																				
\NewDocumentCommand{\dbydt}{o m G{}}{\frac{\dif \IfNoValueF{#1}{^{#1}} {#3}}{\dif {{#2}}\IfNoValueF{#1}{^{#1}}}}						
\NewDocumentCommand{\dbydp}{o m G{}}{\frac{\partial\IfNoValueF{#1}{^{#1}} {#3}}{\partial { {#2} }\IfNoValueF{#1}{^{#1}}}}	
\newcommand{\Hilb}{\mathcal H}								
\newcommand{\smth}[1][\infty]{\mathcal C^#1}			
\DeclareMathOperator{\Lint}{L}									
\newcommand{\ModSp}{\mathcal M}							
\newcommand{\Lqnt}{\mathscr L}										
\newcommand{\Teich}{\mathcal T}												
\DeclareMathOperator{\Mod}{Mod}											
\newcommand{\Nabla}{\boldsymbol \nabla}								
\newcommand{\HWC}{\tilde \Nabla{}}										
\newcommand{\EHWC}{\HWC^{\End}}										
\let\bar\overline
\newcommand{\lalg}[1]{\mathfrak{#1}}									
\newcommand{\su}{\lalg{su}}
\newcommand{\subsupk}[2]{#1_{#2}^{(k)}}
\newcommand{\Triv}{P}																
\newcommand{\triv}[1][l]{\Triv^{(#1)}}											
\newcommand{\Sexp}[1][ l ]{{\mathcal{R}}^{(#1)}}						
\newcommand{\Pexp}[1][ l ]{A^{(#1)}}										
\newcommand{\Obs}{\operatorname{Obs}^{(l)} (\mathcal{R})}	
\newcommand{\Coeff}[2]{\alpha_{#1}^{(#2)}}								
\newcommand{\Xoeff}[2]{X_{#1}^{(#2)}}										
\newcommand{\Troeff}[2]{\tilde{X}_{#1}^{(#2)}}							
\newcommand{\Trunc}[1][l]{\operatorname{Trunc}^{(l)}}				
\newcommand{\dT}{{\dif\,}_{\Teich}}												
\newcommand{\dF}{{\dif \,}^{F}_{\Teich}}										
\newcommand{\nablatr}{\nabla^{\textup{Tr}}}								
\newcommand{\FHWC}{\tilde{\mathcal{D}}}														
\newcommand{\Toep}[1]{\subsupk{T}{#1}}									
\newcommand{\curveop}[1][f]{\mathcal{C}_{#1}}						
\newcommand{\BTstar}{\star^{\text{BT}}}									
\newcommand{\DiffD}{\operatorname{D}_{k}}									
\newcommand{\DiffA}{\mathscr{A}_{k}}									
\title{Asymptotic properties of the Hitchin-Witten connection}
\author{Jørgen Ellegaard Andersen and Alessandro Malus\`{a}
\thanks{Supported in part by the center of excellence grant "Centre for quantum geometry of Moduli Spaces" DNRF95, from the Danish National Research Foundation.}}
\date{}
\begin{document} 

\maketitle

\phantomsection
\pdfbookmark[1]{Abstract}{Abstract}
\begin{abstract}
	We explore extensions to $\SL(n,\Cx)$-Chern-Simons theory of some results obtained for $\SU(n)$-Chern-Simons theory via the asymptotic properties of the Hitchin connection and its relation to Toeplitz operators developed previously by the first named author.
	We define a formal Hitchin-Witten connection for the imaginary part $s$ of the quantum parameter $t = k+is$ and investigate the existence of a formal trivialisation.
	After reducing the problem to a recursive system of differential equations, we identify a cohomological obstruction to the existence of a solution.
	We explicitly find one for the first step, in the specific case of an operator of order $0$, and show in general the vanishing of a weakened version of the obstruction.
	We also find a solution of the whole recursion in the case of a surface of genus $1$.
\end{abstract}

\phantomsection
\pdfbookmark[1]{Contents}{Contents}
\tableofcontents

\section{Introduction}

In the context of the $\SU(n)$-Chern-Simons theory, a deformation quantisation can be obtained from the study of the asymptotic properties of Toeplitz operators and the Hitchin connection as was done in~\cite{And05,And06,And12}.
Let $\Sigma$ denote a smooth, oriented, closed surface of genus $g \geq 2$.
The construction starts from the setting of~\cite{Hit90,ADPW91,And12}, considering the moduli space $\ModSp^{n,d}$ of flat $\SU(n)$-connections on $\Sigma$ with fixed holonomy $e^{2 \pi i d/n} \1$ around a puncture.
If $\Sigma$ comes with a Riemann surface structure $\sigma$, then one can run Kähler quantisation on the moduli space using the induced complex structure and the Chern-Simons pre-quantum line bundle $\Lqnt$.
For $k$ a positive integer, the level-$k$ quantum Hilbert space is $H^{0}_{\sigma} (\ModSp^{n,d} , \Lqnt^{k})$, i.e. that of holomorphic sections of $\Lqnt^{k} := \Lqnt^{\otimes k}$, with the $\Lint^{2}$ product.
To every smooth function $f$ on $\ModSp^{n,d}$ is associated a Toeplitz operator $\Toep{\sigma,f}$, which consists of the multiplication by $f$ followed by the orthogonal projection to $H^{0}_{\sigma} (\ModSp^{n,d} , \Lqnt^{k})$.
When $\ModSp^{n,d}$ is smooth, it follows from the works of Bordeman, Karabegov, Meinrenken, and Schlichenmaier~\cite{BMS94,Sch98,Sch00,KS01} that, for functions $f$ and $g$, there exists an all-order asymptotic expansion
\begin{equation*}
	\Toep{\sigma,f} \circ \Toep{\sigma,g} \sim \sum_{l = 0}^{\infty} \Toep{\sigma,c^{(l)} (f,g)} k^{-l} \, .
\end{equation*}
Moreover, the $c^{(l)}$'s act on $f$ and $g$ as bi-differential operators and define a deformation product $\BTstar_{\sigma}$ on $\smth(\ModSp^{n,d}) [[\hbar]]$, called the Berezin-Toeplitz product.

These constructions depend on the Riemann surface structure $\sigma$ on $\Sigma$ via its isotopy class: as this parameter varies, the objects obtained above are parametrised by the Teichmüller space $\Teich$.
For each level $k$, the quantum Hilbert spaces form the so-called Verlinde bundle $H^{(k)}$, which hosts the projectively flat Hitchin connection $\Nabla$~\cite{Hit90,ADPW91,And12, AG14}.
As $\sigma$ varies over $\Teich$, one may view the collection $\Toep{f}$ of the Toeplitz operators associated to a function $f$ as a section of $\End(H^{(k)})$.
It is then of interest to determine to what extent this depends on $\sigma$, which is measured by its covariant derivative with respect to the endomorphism connection $\Nabla^{\End}$ induced by $\Nabla$.
Although said derivative is not zero in general, it is proven by Andersen in~\cite{And12} that, for every vector $V$ tangent to the Teichmüller space, there exists a unique asymptotic expansion of $\Nabla^{\End}_{V} \Toep{f}$ as
\begin{equation*}
	\Nabla^{\End}_{V} \Toep{f} \sim \sum_{l=1}^{\infty} \Toep{\tilde{D}_{V} (f)} ( 2k + \lambda )^{-l} \, ,
\end{equation*}
where $\lambda = 2 \operatorname{GCD}(n,d)$.
The coefficients $\tilde{D}_{V}(f)$ are linear in $V$ and act on $f$ as differential operators, and they define a connection $D$ on the trivial bundle $\Teich \times \smth(\ModSp^{n,d})[[ \hbar ]]\to\Teich$, called the formal Hitchin connection.
Furthermore, Andersen proves that, having trivial holonomy, the formal connection admits a trivialisation.
Combined with the compatibility of $D$ with the products $\BTstar_{\sigma}$, this produces an identification of the deformation quantisations arising from different values of $\sigma$.

It should be mentioned at this point that the construction above is compatible with the mapping class group of the surface in the following sense.
The simultaneous action of $\Mod_{\Sigma}$ on $\Teich$ and $\ModSp^{n,d}$ induces one on the Verlinde bundle, and it follows from the equivariance of the Kähler structure that the action preserves the Hitchin connection.
A projective representation of $\Mod_{\Sigma}$ can then be obtained at every level $k$ on the space of covariantly constant sections of the Verlinde bundle, which was proven to be asymptotically faithful in \cite{And06}.
It is argued in~\cite{And12} that the trivialisation of $D$ can also be chosen to be $\Mod_{\Sigma}$-invariant, and so is the resulting $\sigma$-independent deformation quantisation.
		
Another interesting aspect of the asymptotic properties of Toeplitz operators regards the relation between the approaches to Chern-Simons theory via geometric quantisation and the Witten-Reshetikhin-Turaev TQFT.
The data of a simple closed curve $\gamma \subset \Sigma$ and a representation $\rho$ of $\SU(n)$ defines a quantum operator in each of these two viewpoints.
On the one hand, the holonomy function $h_{\gamma,\rho}$ determines a Toeplitz operator, call it $\Toep{\gamma,\rho}$.
On the other hand, $\gamma$ determines a framed knot in $\Sigma \times [0,1]$; once decorated with $\rho$, this defines an operator in the Reshetikhin-Turaev TQFT.
These two operators can be compared using the chain of isomorphisms of~\cite{AU1,AU2,AU3,AU4,Las98}, and it was proven by Andersen in~\cite{And10} that their difference vanishes asymptotically at the first order for $k \to \infty$.

The goal of the present work is to extend part of the picture recalled above to the case of $\SL(n,\Cx)$, following Witten's approach~\cite{Wit91,AG14}.
Consider the moduli space $\ModSp^{n,d}_{\Cx}$ of reductive flat $\SL(n,\Cx)$-connections over $\Sigma$, again with prescribed holonomy around a puncture.
Its smooth locus carries a complex symplectic form, and the real moduli space $\ModSp^{n,d}$ sits inside $\ModSp^{n,d}_{\Cx}$ as a symplectic subspace.
For every complex level $t = k + is$, with $k$ a positive integer, classical Chern-Simons theory defines on $\ModSp^{n,d}_{\Cx}$ a pre-quantum line bundle $\Lqnt^{(t)}$, which restricts to $\Lqnt^{k}$ on $\ModSp^{n,d}$.
Witten considers on the complex moduli space a real polarisation, whose leaves intersect $\ModSp^{n,d}$ transversely.
Since the polarised sections are completely determined by their restriction to $\ModSp^{n,d}$, Witten proposes that one identifies the level-$t$ quantum Hilbert space with $\Lint^{2} (\ModSp^{n,d} , \Lqnt^{k})$.
However, the polarisation depends again essentially on the Riemann surface structure on $\Sigma$, so the resulting Hilbert spaces should be collected into an infinite-rank bundle $\Lint^{2} (\ModSp^{n,d},\Lqnt^{k})\times\Teich\to\Teich$.
In analogy with the situation for $\SU(n)$, the dependence on the Teichmüller parameter is measured by a projectively flat connection $\HWC$, referred to as the Hitchin-Witten connection.
We address the problem of studying the asymptotic properties of $\HWC$ in the imaginary part $s$ of the level, aiming to define an analogue of the formal Hitchin connection and a trivialisation.

Although we keep the Chern-Simons theory as our main motivation, the problem can actually be set in broader and more abstract terms, as in~\cite{AG14}.
In the first part of the paper, we shall denote by $(\ModSp,\omega)$ a symplectic manifold, together with an integrable almost complex structure $J$ which depends smoothly on a parameter $\sigma$ in a complex manifold $\Teich$.
In the presence of a pre-quantum line bundle $\Lqnt$ on $\ModSp$, one may define the trivial bundle
\begin{equation*}
	\Hilb^{(t)} := \Lint^{2} (\ModSp , \Lqnt^{k}) \times \Teich \to \Teich \, .
\end{equation*}
Under the hypotheses of~\cite{AG14}, summarised in Section~\ref{sec:AG14}, a projectively flat connection $\HWC$ is defined, taking the form
\begin{equation*}
	\HWC = \nablatr + \rec{2t} b - \rec{2 \overline{t}} \overline{b} + \dT F
\end{equation*}
for appropriate forms $b$, $\overline{b}$ and $\dT F$ on $\Teich$, valued in finite-order differential operators acting on sections of $\Lqnt^{k}$.
We call this the Hitchin-Witten connection, as it agrees with the one obtained by Witten~\cite{Wit91} for the complex Chern-Simons theory.
Its projective flatness follows from Hitchin's proof of the same property for his connection in the case of $\SU(n)$~\cite{Hit90,AG14}.
If, moreover, a group $\Gamma$ acts on both $\ModSp$ and $\Teich$ in such a way that $J$ is equivariant, the Hitchin-Witten connection is also $\Gamma$-invariant.

Unlike in the case of Kähler quantisation, each of the fibres $\Hilb^{(t)}_{\sigma}$ is closed under multiplication by a smooth function $f$ on $\ModSp$.
Therefore, there is in this case no close analogue of the Toeplitz operators.
One may define a curve operator $\curveop[f]$ as the multiplication by $f$ as specified by the formula
\begin{equation*}
	\curveop (\psi) = f \psi.
\end{equation*}
The terminology is motivated by the ideas of~\cite{And10}, where Toeplitz operators are related to the curve operators from the Witten-Reshetikhin-Turaev TQFT.
One might then attempt to study the asymptotic properties of these curve operators and their Hitchin-Witten covariant derivatives, and look for asymptotic expansions with coefficients of the same kind.
However, this would be too restrictive, for a number of reasons.
First of all, recall that, in our motivational example of $\SL(n,\Cx)$-Chern-Simons theory, sections on $\ModSp$ should be regarded as polarised objects on a larger space $\ModSp_{\Cx}$.
Functions on $\ModSp$ also correspond to the polarised ones on the complex space, and the action of $\curveop[f]$ matches that of the pre-quantum operator associated to the polarised extension of $f$ to $\ModSp_{\Cx}$.
While justifying the commutativity of curve operators, this shows that the relevant algebra to quantise is that of functions on $\ModSp_{\Cx}$ rather than $\ModSp$.
It would then seem natural that one considers asymptotic expansions for finite-order differential operators, and allow these as coefficients even for the expansion of curve operators.
Moreover, it will be clear from the later discussion that an expansion with curve operators as coefficients need not exist in general.

For a fixed value of $k$, we consider the algebra $\DiffA = \DiffD[[\inv{s}]]$ of formal power series in $\inv{s}$ with finite-order differential operators acting on sections of $\Lqnt^{k}$ as coefficients.
Our first result is the existence of a unique formal Hitchin-Witten connection.

\begin{Theorem}
	\label{thm:formal_connection}
	Under the conditions listed in Section~\ref{sec:AG14}, the trivial bundle $\DiffA \times \Teich \to \Teich$ admits a unique formal connection
	\begin{equation*}
		\FHWC = \nablatr + \sum_{l = 0}^{\infty} \FHWC^{(l)}
	\end{equation*}
	characterised by the property that, for every vector field $V$ on $\Teich$, differential operator $D \in \DiffD$ and positive integer $L$, one has
	\begin{equation}
		\label{eq:FHWC_asymptotic}
		\EHWC_{V} D - V[D] - \sum_{l = 0}^{L} \FHWC_{V}^{(l)} D = o \bigl( s^{-L} \bigr)
		\quad \text{ for $s \to \infty$,}
	\end{equation}
	where $\EHWC$ is the connection induced by $\HWC$ on the endomorphism bundle, and the convergence holds in a sense specified in Section~\ref{sec:convergence}.
	Said connection is flat and can be written explicitly as
	\begin{equation*}
		\FHWC (D) = \nabla^{F} (D) - \rec{2k} \sum_{l = 1}^{\infty} (ik)^{l} \bigl[ b - (-1)^{l} \overline{b} , D \bigr] s^{-l} \, .
	\end{equation*}
	Under our assumptions on $\Gamma$, moreover, $\FHWC$ is also invariant under its action.
\end{Theorem}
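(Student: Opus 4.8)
The plan is to read $\FHWC$ off the explicit shape of $\HWC$ by passing to the induced endomorphism connection and expanding the $s$-dependent coefficients as formal power series in $\inv{s}$. Since $\Hilb^{(t)}$ is a trivial bundle, $\nablatr$ induces $V[D]$ on endomorphisms, while a $1$-form $\alpha$ valued in differential operators acts by the commutator $[\alpha_{V}, \cdot\,]$. Writing $\HWC = \nablatr + \alpha$ with $\alpha_{V} = \rec{2t} b_{V} - \rec{2\bar{t}} \bar{b}_{V} + (\dT F)_{V}$, I would first record
\[
    \EHWC_{V} D = V[D] + \bigl[ (\dT F)_{V}, D \bigr] + \rec{2t}[b_{V}, D] - \rec{2\bar{t}}[\bar{b}_{V}, D],
\]
and absorb the terms $V[D] + [(\dT F)_{V}, D]$, which carry no explicit factor of $\rec{2t}$ or $\rec{2\bar{t}}$, into $\nabla^{F}(D)$.

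The heart of the argument is then the elementary expansion of the scalar coefficients near $s = \infty$. From $t = k + is$ one obtains the geometric series
\[
    \rec{2t} = -\rec{2k} \sum_{l = 1}^{\infty} (ik)^{l} s^{-l}, \qquad \rec{2\bar{t}} = -\rec{2k} \sum_{l = 1}^{\infty} (-1)^{l} (ik)^{l} s^{-l},
\]
the second being the complex conjugate of the first. Inserting these into the commutator terms and collecting powers of $\inv{s}$ yields
\[
    \FHWC^{(l)}_{V} D = -\rec{2k} (ik)^{l} \bigl[ b_{V} - (-1)^{l} \bar{b}_{V}, D \bigr] \quad (l \geq 1), \qquad \FHWC^{(0)}_{V} D = \bigl[ (\dT F)_{V}, D \bigr],
\]
which is precisely the asserted closed form. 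Because $b_{V}$, $\bar{b}_{V}$ and $D$ are fixed finite-order operators, each bracket has order bounded uniformly in $l$, so the series is a genuine element of $\DiffA$.

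To establish the asymptotic characterisation~\eqref{eq:FHWC_asymptotic} I would use that $\rec{2t}$ and $\rec{2\bar{t}}$ are honest analytic functions of $s$ whose Taylor remainders past the $L$-th term are $O(s^{-(L+1)})$; multiplying by the fixed operators $[b_{V},D]$ and $[\bar{b}_{V},D]$ and estimating in the operator topology of Section~\ref{sec:convergence} shows the truncation error is $o(s^{-L})$. Uniqueness is then immediate from the uniqueness of asymptotic expansions: the difference of two expansions both satisfying~\eqref{eq:FHWC_asymptotic} would be $o(s^{-L})$ for every $L$, forcing all coefficients to agree by induction on the order in $\inv{s}$.

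Finally, flatness and $\Gamma$-invariance. Projective flatness of $\HWC$ means its curvature is a scalar-valued $2$-form $\Omega$, so the curvature of the endomorphism connection is $D \mapsto [\Omega, D] = 0$ and $\EHWC$ is genuinely flat. I would deduce flatness of $\FHWC$ either by identifying its curvature with the asymptotic expansion of the vanishing curvature of $\EHWC$, or, more safely, by computing the formal curvature of the explicit series and checking that each coefficient of $\inv{s}$ vanishes, using the structure relations among $b$, $\bar{b}$ and $\dT F$ from Section~\ref{sec:AG14}. The $\Gamma$-invariance follows from that of $\HWC$: the coefficients $\rec{2t}$, $\rec{2\bar{t}}$ are $\Gamma$-fixed scalars, and the operators $b$, $\bar{b}$, $F$, together with the commutator operation, are $\Gamma$-equivariant. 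I expect the principal difficulty to lie in making the convergence in~\eqref{eq:FHWC_asymptotic} rigorous in the operator topology of Section~\ref{sec:convergence}—obtaining the remainder estimate uniformly over the relevant test sections—and in justifying the order-by-order flatness computation for the full infinite series rather than for a finite truncation.
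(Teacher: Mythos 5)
Your proposal follows essentially the same route as the paper's proof: the geometric-series expansion of $\rec{2t}$ and $\rec{2\overline{t}}$ at $s = \infty$ yielding the explicit coefficients $\FHWC^{(l)}_{V}(D) = -\frac{(ik)^{l}}{2k}\bigl[b_{V} - (-1)^{l}\overline{b}_{V}, D\bigr]$, uniqueness via the uniqueness of asymptotic expansions, flatness by matching the formal curvature against the Laurent expansion of the vanishing curvature of $\EHWC$ (your route (a), which is the paper's, while your route (b) is noted there as an alternative via the algebraic relations of~\cite{AG14}), and $\Gamma$-invariance from the equivariance of $b$, $\overline{b}$ and $F$. The plan is correct as stated.
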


Next we address the problem of finding a trivialisation for this formal connection, i.e. a map $\DiffD \to \DiffA$ sending every differential operator $D$ to a series
\begin{equation*}
	\mathcal{R} (D) = \sum_{l = 0}^{\infty} \Sexp (D) s^{-l}
\end{equation*}
such that $\Sexp[0] (D) = D$ and $\FHWC \mathcal{R} (D) = 0$.
Written explicitly, this condition boils down to the recursive relations
\begin{equation}
	\label{eq:introrec}
	\dF \Sexp[l] (D) = \rec{2k} \sum_{n = 1}^{l} \left( ik \right)^n \left[ b - (-1)^{n} \overline{b} , \Sexp[l - n] (D) \right] := \Obs \, ,
\end{equation}
for every non-negative integer $l$, where $\dF$ is a suitably defined twisted exterior differential.
It is apparent from the equation that an obstruction to the existence of a solution comes in general from the differential of the right-hand side.
However, we prove the following statement.

\begin{Theorem}
	\label{thm:no_Sobstruction}
	Suppose $D = \Sexp[0] (D) , \dots , \Sexp[l-1] (D)$ are given, which satisfy the first $l$ steps of the recursion.
	Then the right-hand side in~\eqref{eq:introrec} is closed:
	\begin{equation*}
		\dF \Obs = \dF \Biggl( \sum_{n=1}^{l} (ik)^{n} \bigl[ b - (-1)^{n} \overline{b} , \Sexp[l-n] (D) \bigr] \Biggr) = 0 \, .
	\end{equation*}
\end{Theorem}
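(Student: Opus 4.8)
The plan is to read the right-hand side of~\eqref{eq:introrec} as a single coefficient of the covariant derivative of a \emph{truncated} trivialisation, so that its closedness becomes a direct consequence of the flatness of $\FHWC$ established in Theorem~\ref{thm:formal_connection}.

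I would first collect the commutator terms into a single operator-valued one-form on $\Teich$,
\begin{equation*}
	\mathcal{B} := \rec{2k} \sum_{n=1}^{\infty} (ik)^{n} \bigl( b - (-1)^{n} \bar{b} \bigr) s^{-n} ,
\end{equation*}
so that $\FHWC D = \dF D - \ad_{\mathcal{B}} D = \dF D - [\mathcal{B}, D]$, and its extension to forms reads $\FHWC \eta = \dF \eta - [\mathcal{B}, \eta]$. With $\mathcal{R} = \sum_{m \ge 0} \Sexp[m](D) \, s^{-m}$, the object $\Obs$ is then exactly the coefficient of $s^{-l}$ in $[\mathcal{B}, \mathcal{R}]$; in particular it involves only $\Sexp[0](D), \dots, \Sexp[l-1](D)$. (Reading the order-$s^{0}$ part of the flatness identity also records $\dF^{2} = 0$, which is what makes the by-hand version below run.)

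Next I would test flatness against the truncated series $\mathcal{R}_{<l} := \sum_{m=0}^{l-1} \Sexp[m](D) \, s^{-m}$ and set $\mathcal{E} := \FHWC \mathcal{R}_{<l} = \dF \mathcal{R}_{<l} - [\mathcal{B}, \mathcal{R}_{<l}]$. Since the first $l$ steps of the recursion hold by hypothesis, the coefficient of $s^{-m}$ in $\mathcal{E}$ equals $\dF \Sexp[m](D) - \operatorname{Obs}^{(m)}(\mathcal{R})$ and vanishes for every $m \le l-1$. In degree $s^{-l}$ the truncation removes the $\dF \mathcal{R}_{<l}$ contribution and leaves precisely $-\Obs$, so that
\begin{equation*}
	\mathcal{E} = -\Obs \, s^{-l} + O \bigl( s^{-l-1} \bigr) .
\end{equation*}
Applying the covariant exterior derivative once more and using flatness, $\FHWC \mathcal{E} = \FHWC^{2} \mathcal{R}_{<l} = 0$, which by the definition of $\FHWC$ on forms is the identity $\dF \mathcal{E} = [\mathcal{B}, \mathcal{E}]$. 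As $\mathcal{B} = O(s^{-1})$ and $\mathcal{E} = O(s^{-l})$, the right-hand side is $O(s^{-l-1})$; comparing the coefficients of $s^{-l}$ gives $-\dF \Obs = 0$, which is the assertion.

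The step I expect to demand the most care is purely formal bookkeeping: justifying that $\FHWC^{2} = 0$ and the graded Leibniz rule may be applied term by term to power series of operator-valued forms, and tracking the form-degree signs when passing from $\FHWC^{2} \mathcal{R}_{<l} = 0$ to $\dF \mathcal{E} = [\mathcal{B}, \mathcal{E}]$. Should one prefer to avoid the abstract flatness input, the same conclusion follows by direct computation: substitute the already-verified relations $\dF \Sexp[l-n](D) = \operatorname{Obs}^{(l-n)}(\mathcal{R})$ into $\dF \Obs$, insert the expressions for $\dF b$ and $\dF \bar{b}$ coming from the flatness of $\HWC$, and collapse the resulting double commutators via the graded Jacobi identity; the cancellation one witnesses is exactly the Maurer--Cartan content of $\FHWC^{2} = 0$, which is why the argument above makes it automatic.
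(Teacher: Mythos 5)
Your proof is correct, and it takes a genuinely different route from the paper's. The paper proves the statement by brute-force coefficient computation: it expands $\dF \Obs$ with the Leibniz rule, substitutes the recursion into the terms $\dF \Sexp[l-n](D)$, and then cancels the resulting double commutators against the $\bigl[\dF\bigl(b - (-1)^{n}\overline{b}\bigr), \Sexp[l-n](D)\bigr]$ terms using the Jacobi identity of Lemma~\ref{lemma:twisteddiff}, the centrality of $[b \wedge b]$, $[\overline{b} \wedge \overline{b}]$ and $[b \wedge \overline{b}]$, and careful parity bookkeeping in the summation indices (the surviving contributions on both sides reduce to sums over even $n = 2r$ of $\bigl[[b \wedge \overline{b}] \wedge \Sexp[l-2r](D)\bigr]$, which cancel). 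Your argument packages all of this into a single structural step: writing $\FHWC = \dF - \ad_{\mathcal{B}}$, observing that $\mathcal{E} = \FHWC \mathcal{R}_{<l} = -\rec{2k}\Obs\, s^{-l} + O(s^{-l-1})$ by the inductive hypothesis, and reading off the $s^{-l}$ coefficient of $\FHWC \mathcal{E} = \FHWC^{2} \mathcal{R}_{<l} = 0$, where the bracket term $[\mathcal{B} \wedge \mathcal{E}]$ is pushed to order $s^{-l-1}$ because $\mathcal{B} = O(\inv{s})$. Note the two proofs share the same essential input — the paper also invokes the flatness of $\FHWC$ midway, precisely to handle the $\dF\bigl(b - (-1)^{n}\overline{b}\bigr)$ terms — so your argument is morally the Maurer--Cartan repackaging of theirs, and your closing "by hand" alternative is essentially the paper's actual proof. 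What your route buys is brevity and generality: it works verbatim for any flat formal connection of the form $\dF - \ad_{\mathcal{B}}$ with $\mathcal{B} = O(\inv{s})$, making transparent that the vanishing of the obstruction is a Bianchi-type consequence of flatness, standard in recursive constructions of flat sections. What the paper's computation buys is an explicit identification of which algebraic relations produce the cancellation, without needing to first set up the exterior covariant derivative on $\DiffA$-valued forms. The one point you rightly flag as delicate — that $\FHWC^{2} = 0$ on forms, which requires $\dF^{2} = 0$ and the graded Leibniz rule — does hold: $\dF^{2}$ acts by the commutator with $\dT \dT F + \rec{2}[\dT F \wedge \dT F]$, and both terms vanish since $\dT^{2} = 0$ and multiplication operators commute; the curvature identity the paper proves in Theorem~\ref{thm:formal_connection} then gives exactly $\FHWC(\FHWC \mathcal{R}_{<l}) = -\bigl[\bigl(\dF \mathcal{B} - \rec{2}[\mathcal{B} \wedge \mathcal{B}]\bigr) \wedge \mathcal{R}_{<l}\bigr] = 0$, so your black-box use of flatness is fully justified.
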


The statement shows that $\Obs$ defines a class in $H^{1} (\Teich , \DiffD)$, and a solution of the differential equation exists if and only if said class vanishes.
While this is the case in Chern-Simons theory, the Teichmüller space being contractible, this does not automatically conclude our discussion, as it is of crucial importance that the solution be $\Gamma$-invariant.
If $\Sexp[0] (D) , \dots , \Sexp[l-1] (D)$ satisfy this condition, then a class is defined in $H^{1}_{\Gamma} ( \Teich , \DiffD)$, the first cohomology group of the complex of $\Gamma$-invariant $\DiffD$-valued forms on $\Teich$.
It is a consequence of the definitions that a $\Gamma$-invariant solution of the $l$-th step exists if and only if this class vanishes; in particular a $\Gamma$-invariant trivialisation of $\FHWC$ exists if $H^{1}_{\Gamma} ( \Teich,\DiffD) = 0$.

\begin{Theorem}
	\label{thm:curveop}
	In the case when $D = \curveop$ is the operator of multiplication by a smooth function $f$, independent on the Teichmüller parameter, we find a $\Gamma$-invariant first-order solution $\Sexp[1] (\curveop)$, for which moreover
	\begin{equation*}
		\EHWC \bigl( \curveop + \Sexp[1] (\curveop) \inv{s} \bigr) = o (\inv{\abs{t}}) \, .
	\end{equation*}
\end{Theorem}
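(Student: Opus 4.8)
The statement concerns only the first step of the recursion~\eqref{eq:introrec}, so the plan is to solve that single equation explicitly. Specialising~\eqref{eq:introrec} to $l = 1$ and $D = \curveop$, and using that $\Sexp[0](\curveop) = \curveop$ is the $\sigma$-independent multiplication by $f$, the equation to solve reads
\begin{equation*}
	\dF \Sexp[1] (\curveop) = \frac{i}{2} \bigl[ b + \overline{b} , \curveop \bigr] \, .
\end{equation*}
By Theorem~\ref{thm:no_Sobstruction} the right-hand side is $\dF$-closed, so it represents a class in $H^{1} (\Teich , \DiffD)$; the task is to produce an explicit, $\Gamma$-invariant primitive. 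I would first record that $\curveop$ solves the zeroth step, i.e. $\dF \curveop = \nablatr \curveop + [\dT F , \curveop] = 0$: this holds because $\nablatr \curveop = 0$ as $f$ does not depend on $\sigma$, while $\dT F$ acts by multiplication and hence commutes with $\curveop$.

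Next I would compute the commutator $[b + \overline{b} , \curveop]$ using the explicit shape of $b$ and $\overline{b}$ from Section~\ref{sec:AG14}. Since these are differential operators of order at most two, commuting with multiplication by $f$ lowers the order by one: the principal part produces a first-order operator whose symbol is the contraction of the variation tensors $G(V) + \overline{G}(V)$ against $\dif f$, together with an explicit zeroth-order term coming from the Hessian of $f$ and the coupling to $\dT F$. The outcome is a $1$-form on $\Teich$ valued in first-order operators, depending on $f$ only through $\dif f$ and its second derivatives.

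The crux is to exhibit the primitive. The idea is that the contraction $(G(V) + \overline{G}(V)) \contr \dif f$ is, up to lower-order terms and the $F$-twist, the variation in the direction $V$ of a canonically defined first-order operator attached to $f$ --- concretely the operator $\nabla_{\eta}$ associated to the Riemannian gradient $\eta$ of $f$, whose dependence on $\sigma$ enters only through the Kähler metric, while $f$ and its Hamiltonian field remain fixed. Matching the $\sigma$-derivative of this operator, twisted by $\dT F$ so as to produce $\dF$ rather than $\nablatr$, against the computed right-hand side identifies $\Sexp[1](\curveop)$ as an explicit first-order operator built solely from $f$ and the geometric data $(\omega , J_{\sigma} , F)$. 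Because no arbitrary choices enter and all ingredients are $\Gamma$-equivariant, the resulting $\Sexp[1](\curveop)$ is automatically $\Gamma$-invariant; this is the decisive point, the existence of \emph{some} primitive being already guaranteed by contractibility of $\Teich$. I expect this matching --- pinning down the precise primitive and verifying that $\dF$ of it reproduces the commutator down to the lower-order terms --- to be the main obstacle, as it requires the full structure of the AG14 operators.

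Finally, the asymptotic estimate follows formally once $\Sexp[1](\curveop)$ is in hand. Applying $\FHWC$ to $\curveop + \Sexp[1](\curveop) \inv{s}$ and using the explicit formula of Theorem~\ref{thm:formal_connection} together with $\dF \curveop = 0$ and the first-step equation, the coefficients of $s^{0}$ and $\inv{s}$ cancel, leaving $\FHWC \bigl( \curveop + \Sexp[1](\curveop) \inv{s} \bigr) = O(s^{-2})$. Invoking the defining asymptotics~\eqref{eq:FHWC_asymptotic} to pass back from $\FHWC$ to the genuine endomorphism connection $\EHWC$, and recalling that $\abs{t} \sim s$ as $s \to \infty$, yields $\EHWC \bigl( \curveop + \Sexp[1](\curveop) \inv{s} \bigr) = o(\inv{\abs{t}})$, as claimed.
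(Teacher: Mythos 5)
Your proposal is correct and follows essentially the same route as the paper: there the $l=1$ equation is solved by the explicit primitive $\Sexp[1](f) = -\tfrac{i}{2}\bigl(2\nabla_{\tilde g \cdot \dif f} + 2\,\dif f \contr \tilde g \contr \dif F + \Delta f\bigr)$, obtained exactly as you describe via $\dT \nabla_{\tilde g \cdot \dif f} = -\nabla_{\tilde G \cdot \dif f}$ (since $V[\tilde g] = -\tilde G(V)$), with the commutator $-2\bigl[\dT F , \nabla_{\tilde g \cdot \dif f}\bigr]$ producing the $2\,\dif f \contr \tilde G \contr \dif F$ coupling up to a $\dT$-exact correction, and $-\Delta f$ serving as a central primitive of $\Delta_{\tilde G} f$; so the matching you flag as the main obstacle is in fact a short direct computation. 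Your $\Gamma$-invariance argument (canonicality of the construction in $f$, $g$ and $F$) and your asymptotic argument (Proposition~\ref{lemma:asymptotic} with $L=1$, plus $\abs{t} \sim \abs{s}$ for fixed $k$) coincide with the paper's.
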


In order to illustrate our motivation for restricting to the case when $k$ is fixed, we briefly discuss an analogous recursion for the formal parameters $1/t$ and $1/\overline{t}$.
We conclude that the cohomological obstruction arising in that situation does not vanish in general, not even for a zero-order operator $\curveop$.

We then focus again on Chern-Simons theory in the specific situation of a surface $\Sigma$ of genus $1$.
Although this case was excluded from the general discussion presented above, the key constructions can be carried out using adapted arguments.
In fact, the Hitchin and Hitchin-Witten connections are still defined in this case, and they are furthermore flat, the latter having an explicit trivialisation proposed by Witten.
We find a sequence of $\Mod_{\Sigma}$-invariant operators $\Pexp (D)$ satisfying a recursive relation similar to the desired one for the $\Sexp (D)$s, which motivates us to look for a solution of the original recursion of the form
\begin{equation*}
	\Sexp (D) = \sum_{r = 0}^{l} \Coeff{r}{l} \Pexp[l-r] (D) \, .
\end{equation*}
By looking for solutions of this kind specifically, the problem reduces to a numeric recursion on the coefficients $\Coeff{r}{l}$, which leads to our final result.

\begin{Theorem}
	\label{thm:introtrivialisation}
	Every sequence of complex numbers $\Coeff{r}{r}$ uniquely extends to a solution of the aforementioned numeric recursion.
	As a consequence, there exist infinitely many solution of the form above, resulting in $\Mod_{\Sigma}$-invariant trivialisations of the formal Hitchin-Witten connection.
	Moreover, any two such objects are related by the multiplication by a power series in $\inv{s}$ with constant coefficients, and one particular solution of this kind arises as a formal expansion of the explicit trivialisation of $\HWC$.
\end{Theorem}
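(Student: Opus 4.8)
The plan is to express every trivialisation in terms of the auxiliary operators $\Pexp[j] (D)$ extracted from Witten's explicit trivialisation of $\HWC$ on the genus-$1$ moduli space, and to convert the operator-valued recursion~\eqref{eq:introrec} into a purely numerical one on the scalars $\Coeff{r}{l}$. The first input I would record is the recursive relation obeyed by the sequence $\Pexp[j] (D)$: since it is the order-by-order expansion of a genuine flat section of $\HWC$, it satisfies a relation of the same shape as~\eqref{eq:introrec}, the discrepancy stemming only from re-expanding the factors $\inv{t}$ and $\inv{\overline{t}}$ of Witten's formula as power series in $\inv{s}$ at fixed $k$; checking that this relation in fact solves~\eqref{eq:introrec} exhibits $\sum_{j} \Pexp[j] (D)\, s^{-j}$ as the distinguished solution named in the last sentence of the statement. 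I would then substitute the ansatz $\Sexp[l] (D) = \sum_{r = 0}^{l} \Coeff{r}{l} \Pexp[l-r] (D)$ into both sides of~\eqref{eq:introrec}.

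Applying $\dF$ termwise to the ansatz and invoking the recursion for each $\Pexp[l-r] (D)$ rewrites the left-hand side as a double sum of commutators of the type $(ik)^{n} [b - (-1)^{n} \overline{b}, \Pexp[j] (D)]$; inserting the ansatz for the lower terms $\Sexp[l-m] (D)$ turns the right-hand side into a double sum of the same commutators. The key bookkeeping step is to collect, on each side, the coefficient of a given $[b - (-1)^{n} \overline{b}, \Pexp[j] (D)]$, keeping the $b$- and $\overline{b}$-parts separate because of the sign $(-1)^{n}$. Equating these coefficients yields, at each order $l$, a triangular system of linear relations expressing the $\Coeff{r}{l}$ in terms of lower-order coefficients together with the free datum $\Coeff{l}{l}$; this is the numeric recursion of the statement. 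By induction on $l$, a prescribed sequence of diagonal values $\Coeff{r}{r}$ then determines all remaining $\Coeff{r}{l}$ uniquely, with $\Coeff{0}{0} = 1$ forced by the normalisation $\Sexp[0] (D) = D$, which proves the first assertion.

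For the remaining claims I would argue structurally. Each solution so obtained is a fixed scalar combination, with constant coefficients, of the $\Mod_\Sigma$-invariant operators $\Pexp[j] (D)$, hence is itself $\Mod_\Sigma$-invariant and, by construction, trivialises $\FHWC$; since the diagonal sequence is arbitrary, there are infinitely many of these. For the torsor statement I would use that $\FHWC$ differs from the composition-compatible connection $\nabla^{F}$ only by the inner derivations $-\tfrac{1}{2k} \sum_{l} (ik)^{l} [b - (-1)^{l} \overline{b}, \cdot]\, s^{-l}$, so it acts as a derivation of the operator product; a scalar power series $u = \sum_{m} u_{m}\, s^{-m}$ with constant coefficients is then central and satisfies $\FHWC u = 0$, so that $u \cdot \mathcal{R} (D)$ is again flat whenever $\mathcal{R} (D)$ is. Tracking the diagonal through this multiplication shows that every ansatz solution has $\Coeff{r}{l} = \Coeff{r}{r}$ and hence factorises as
\begin{equation*}
	\sum_{l = 0}^{\infty} \Sexp[l] (D)\, s^{-l} = \Biggl( \sum_{r = 0}^{\infty} \Coeff{r}{r}\, s^{-r} \Biggr) \cdot \Biggl( \sum_{j = 0}^{\infty} \Pexp[j] (D)\, s^{-j} \Biggr) \, ,
\end{equation*}
so any two solutions differ precisely by multiplication by such a series, and the base point $\Coeff{r}{r} = \delta_{r,0}$ recovers Witten's expansion.

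The step I expect to be the main obstacle is the first, geometric one rather than the combinatorics. Genus $1$ is excluded from the hypotheses of Section~\ref{sec:AG14}, so the existence and the precise shape of the recursion for $\Pexp[j] (D)$ must be established by hand from the special structure of the torus moduli space and from Witten's explicit operator, which requires a careful expansion of $\inv{t}$ and $\inv{\overline{t}}$ in $\inv{s}$ and control of the resulting $k$-dependent mixing between orders. A secondary delicate point is to verify that equating the two double sums genuinely forces the numeric recursion and creates no spurious constraints, which hinges on understanding how the signs $(-1)^{n}$ attached to $b$ and $\overline{b}$ interact across the two sides.
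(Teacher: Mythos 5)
Your skeleton matches the paper's (ansatz $\Sexp[l](D)=\sum_r \Coeff{r}{l}\Pexp[l-r](D)$, reduction to a numeric recursion, free diagonal data, scalar-series torsor, Witten's trivialisation as a particular solution), but two of your concrete claims are wrong, and the one point you defer is precisely the crux. First, the series $\sum_j \Pexp[j](D)\, s^{-j}$ is \emph{not} a solution of the recursion, and it is not the expansion of Witten's trivialisation. The operators $\Pexp[l](D)=\ad_a^l(D)/l!$ with $a=-\tfrac{i}{2}\Delta$ satisfy
\begin{equation*}
	\dF \Pexp (D) = \sum_{n=1}^{l} \frac{(2ik)^n}{4k\,n!} \bigl[ b - (-1)^n \overline{b} , \Pexp[l-n] (D) \bigr] \, ,
\end{equation*}
and the coefficient $\tfrac{(2ik)^n}{4k\,n!}=\tfrac{(ik)^n}{2k}\cdot\tfrac{2^{n-1}}{n!}$ agrees with the recursion's $\tfrac{(ik)^n}{2k}$ only for $n=1,2$ (at $n=3$ one gets $\tfrac{(ik)^3}{3k}$ versus $\tfrac{(ik)^3}{2k}$). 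This mismatch is the entire reason the nontrivial coefficients $\Coeff{r}{l}$ are needed. Witten's trivialisation expands, via BCH and the odd series $r=\tfrac{1}{2k}\sum_m \tfrac{(ik)^{2m+1}}{2m+1}s^{-2m-1}$, as $\sum_n u(s)^n \Pexp[n](D)$ with $u(s)=\sum_m \tfrac{(ik)^{2m}}{2m+1}s^{-2m-1}$; it is the solution with vanishing diagonal $\Coeff{l}{l}=0$ for $l>0$, and it has, e.g., $\Coeff{2}{3}=\tfrac{(ik)^2}{3}\neq 0=\Coeff{2}{2}$. Hence your claim that every ansatz solution has $\Coeff{r}{l}=\Coeff{r}{r}$, and your displayed factorisation through $\sum_j\Pexp[j](D)s^{-j}$, are both false; the correct statement is that any solution equals $\bigl(\sum_l \Coeff{l}{l}s^{-l}\bigr)\cdot\mathcal{R}_0$ where $\mathcal{R}_0$ is the zero-diagonal solution, which is not the naive series.

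Second, the "secondary delicate point" you leave open — that equating coefficients of the $\Pexp[l-m]$'s creates no spurious constraints — is where existence is actually at stake, so the first sentence of the theorem is unproven in your write-up. Separating the $b$- and $\overline{b}$-parts yields \emph{two} triangular systems $E^{\pm}_{m,l}=0$ in the same unknowns, a priori overdetermined. The paper resolves this by passing to the equivalent system $E^{\pm}_{m,l}\mp ik\,E^{\pm}_{m-1,l-1}=0$, writing it as $L^{(l)}_{\pm}\Troeff{}{l}=R^{(l)}_{\pm}\Xoeff{}{l-1}$ with both matrices polynomials in the nilpotent shift $N$, recognising $L^{(l)}_{\pm}$ and $R^{(l)}_{\pm}$ as the Taylor series of $\tfrac{e^{\pm 2ikz}-1}{2z}$ and $\pm ik\,\tfrac{e^{\pm 2ikz}+1}{2}$, and inverting to get $\Troeff{}{l}=\pm ik\tanh(\pm ikN)\,N\,\Xoeff{}{l-1}$; since $\tanh$ is odd, the two sign systems coincide, and uniqueness given the diagonal follows. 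Without an argument of this type you have no proof that the overdetermined system is consistent. A minor further point: your anticipated "main obstacle" (re-expanding $\inv{t}$, $\inv{\overline{t}}$ to derive the $\Pexp$-relation) is a detour — the relation follows directly from the commutator identity $[b\pm\overline{b},\Delta]=4k(b\mp\overline{b})$ by the same term-wise computation used for the explicit trivialisation, and the $\inv{s}$-expansion of $r$ enters only at the very end, to identify which diagonal Witten's solution realises.
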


\subsection*{Plan of the exposition}

We open this paper by briefly summarising the main facts from~\cite{AG14} that we are going to use.
In particular, we list the hypotheses used in the cited work for defining the Hitchin-Witten connection and proving its projective flatness.
This is the content of Section~\ref{sec:AG14}.

In Section~\ref{sec:convergence} we discuss the matters of convergence for the expansions that we are going to consider, and specify the meaning of the asymptotic limits.

The main results of this paper are detailed in Section~\ref{sec:FHWC}, in which we discuss the formal Hitchin-Witten connection for the imaginary part of the quantum parameters in the general situation.
After proving Theorem~\ref{thm:formal_connection}, we argue that a trivialisation of the Hitchin-Witten connection is equivalent to a perturbative series of differential operators which are covariantly constant asymptotically to every order.
We then proceed to set up the recursive equations determining the trivialisation and prove Theorems~\ref{thm:no_Sobstruction} and~\ref{thm:curveop}.
We conclude the section with a proof that the same recursive approach fails when working with formal Laurent series in $t$ and $\overline{t}$, thus motivating our choice of working with $s$ instead.

Section~\ref{sec:genus1} is dedicated to the specific case of the moduli space of flat $\SU(n)$-connections on a surface of genus $1$.
After discussing the adaptations needed for defining the Hitchin-Witten connection, we show its explicit trivialisation following Witten.
We then proceed to considering the recursion as in Section~\ref{sec:FHWC} and exhibiting a family of solutions, finally proving Theorem~\ref{thm:introtrivialisation}.

In the Appendix we briefly recall the notion of the total symbol of differential operators, and specify the meaning of smoothness for operator-valued differential forms.

\section[Families of complex structures; the Hitchin-Witten connection]{Families of complex structures and the Hitchin-Witten connection}
\label{sec:AG14}

In this section we set the notations and summarise some of the definitions and results of~\cite{AG14}.

Unless otherwise specified, $(\ModSp,\omega)$ will denote a symplectic manifold with $H^{1}(\ModSp,\R) = 0$, a pre-quantum line bundle $\Lqnt$ and a family of complex structures parametrised by a complex manifold $\Teich$.
By this we mean a map $J \colon \Teich \to \smth(\ModSp , \End(\Tan \ModSp))$, smooth in the sense of Appendix~\ref{sec:operator_forms}, valued in integrable almost complex structures on $\ModSp$.
The tensor $J_{\sigma}$ is also assumed to form a Kähler structure together with $\omega$ and $g_{\sigma} = \omega \contr J_{\sigma}$, and to have no non-trivial holomorphic functions or vector fields.
In the following we shall often neglect the subscript $\sigma$ in $J_{\sigma}$ and $g_{\sigma}$.
Also, we shall denote by $\dT$ the de Rham differential on $\Teich$, to avoid confusion with that on $\ModSp$, and similarly write $\partial_{\Teich}$ and $\overline{\partial}_{\Teich}$ for its  $(1,0)$ and $(0,1)$ parts.

With $J$ we assume given a Ricci potential, i.e. a real-valued function $F$ on $\ModSp \times \Teich$ such that, for fixed $\sigma \in \Teich$, on has
\begin{equation*}
	2 i \partial \overline{\partial} F = \rho - \lambda \omega \, ,
\end{equation*}
where $\lambda$ is a constant parameter and $\rho$ denotes the Ricci form of the Kähler metric.
This requires that $\lambda \omega/2 \pi$ represents the first Chern class of $(\ModSp,\omega)$, and it is conversely implied by this condition via the $i \partial \overline{\partial}$-lemma if $\ModSp$ is compact.
Such a potential is unique under our assumptions up to addition of a constant; a choice may be fixed in the case when $\ModSp$ is compact by fixing its mean value to $0$.
In the following we shall use $\dF = \dT + \dT F$ to denote the exterior differential twisted with $\dT F$.
Notice that, while for non-compact $\ModSp$ there is no preferred choice of $F$ in general, the ambiguity induced on $\dF$ is by a term valued in central differential operators, and hence vanishes when acting on operator-valued forms.

If $V$ is a vector on $\Teich$, it is easily seen by differentiating $J^{2} = - \1$ that $V[J]$ and $J$ anti-commute, so $V[J]$ maps $\Tan^{(1,0)} \ModSp$ to $\Tan^{(0,1)} \ModSp$ and conversely.
This implies that the variation of the inverse $\tilde{g}$ of $g$ decomposes into parts $(2,0)$ and $(0,2)$, since the inverse $\tilde{\omega}$ of $\omega$ is of type $(1,1)$, and so
\begin{equation*}
	- V[\tilde{g}] =: \tilde{G}(V) = \tilde{\omega} \contr V[J] = G(V) + \overline{G}(V) \, .
\end{equation*}
On the other hand, $\tilde{G}$ can be thought of as a form on $\Teich$ valued in tensor fields on $\ModSp$ and decomposed into its  $(1,0)$ and $(0,1)$ parts accordingly.
We shall assume that $J$ enjoys the following two properties.

\begin{Definition}
	We say that $J$ is holomorphic if $G$ and $\overline{G}$ are of type $(1,0)$ and $(0,1)$ as a differential form on $\Teich$, respectively.
	We say that moreover it is rigid if, for every $V$ on $\Teich$, the tensor field $G(V)$ on $\ModSp$ is holomorphic.
\end{Definition}

With the metric $g$ comes a Laplace-Beltrami operator $\Delta$ acting on sections of $\Lqnt^{k}$, which depends on $\sigma$ through both $\tilde{g}$ and the Levi-Civita connection.
It can be showed that the variation of this operator is given by
\begin{equation*}
	- V[\Delta] = \nabla^{2}_{\tilde{G}(V)} + \nabla_{\delta \tilde{G} (V)} = : \Delta_{\tilde{G}(V)} \, .
\end{equation*}
The holomorphic and anti-holomorphic derivatives are given by the corresponding expressions with $G$ and $\overline{G}$ in place of $\tilde{G}$.

After these premises we are ready to define the Hitchin-Witten connection.

\begin{Definition}
	Consider the forms on $\Teich$, valued in differential operators acting on sections of (tensor powers of) $\Lqnt$, given by
	\begin{equation*}
		\begin{gathered}
			b(V) = \Delta_{G(V)} + 2 \nabla_{G(V) \cdot \dif F} - 2 \lambda V'[ F ] , \\
			\overline{b}(V) = \Delta_{\overline{G}(V)} + 2 \nabla_{\overline{G}(V) \cdot \dif F} - 2 \lambda V'' [ F ] \, .
		\end{gathered}
	\end{equation*}
	For a fixed parameter $t = k + is$ with $k \in \Z_{>0}$ and $s \in \R$, define the level-$t$ Hitchin-Witten connection on the trivial bundle $\smth(\ModSp , \Lqnt^{k}) \times \Teich \to \Teich$ as
	\begin{equation}
		\label{eq:HWC}
		\HWC = \nablatr + \rec{2t} b - \rec{2 \overline{t}} \overline{b} + \dT F \, ,
	\end{equation}
	where $\nablatr$ is the trivial connection.
	We denote by $\EHWC$ the connection induced on the endomorphism bundle from $\HWC$.
\end{Definition}

\begin{Theorem}[\cite{Wit89,AG14}]
	Under the assumptions listed above, $\HWC$ is projectively flat, i.e. the curvature $F_{\HWC}$ takes values in central differential operators.
\end{Theorem}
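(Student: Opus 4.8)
The plan is to follow Hitchin's strategy for the $\SU(n)$ case, adapted to keep track of the complex level $t$: compute the curvature of $\HWC$ directly and show that every term of positive differential order cancels, so that only zeroth-order (central) operators survive. Since $\HWC = \nablatr + A$ with the operator-valued one-form $A = \rec{2t} b - \rec{2 \overline{t}} \overline{b} + \dT F$ on $\Teich$ and $\nablatr$ flat, the curvature is
\[
	F_{\HWC} = \dT A + A \wedge A ,
\]
where the wedge combines the exterior product on $\Teich$ with the composition of operators, so that $(A \wedge A)(V,W) = [A(V), A(W)]$. I would expand this and sort the result by bidegree $(2,0)$, $(1,1)$, $(0,2)$ on $\Teich$, using the holomorphicity hypothesis, which says precisely that $b$ is of type $(1,0)$ and $\overline{b}$ of type $(0,1)$, and noting that $\dT(\dT F) = 0$ so that $\dT F$ contributes only through the cross-commutators in $A \wedge A$.

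First I would dispose of the $(2,0)$ and $(0,2)$ parts. The $(2,0)$ piece reads schematically $\rec{2t} \partial_{\Teich} b + \rec{4t^{2}} [b \wedge b] + \rec{2t} [b \wedge \partial_{\Teich} F]$, of which the first two carry all the high-order content. Because $b(V) = \Delta_{G(V)} + \dots$ is second order with principal symbol $G(V)$, the leading contribution is a third-order symbol governed by a Poisson-type bracket of the bivectors $G(V)$ and $G(W)$; the rigidity hypothesis, namely the holomorphicity of $G(V)$ on $\ModSp$, is exactly what forces this symbol to vanish, collapsing the part to lower order. I would then check that the surviving second- and first-order symbols cancel against the variation $\partial_{\Teich} b$ and the commutator with $\partial_{\Teich} F$, leaving a purely central term; the $(0,2)$ part follows by conjugation.

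The delicate piece is the $(1,1)$ part, where $b$ and $\overline{b}$ genuinely mix and the Ricci potential enters. Here I would collect $-\rec{2t} \overline{\partial}_{\Teich} b + \rec{2 \overline{t}} \partial_{\Teich} \overline{b} - \rec{4 t \overline{t}} [b \wedge \overline{b}]$ together with the $(1,1)$ components of the cross-commutators $[b, \dT F]$ and $[\overline{b}, \dT F]$. The main obstacle I anticipate is the mixed bracket $[b \wedge \overline{b}]$, whose top symbol is built from $G(V)$ and $\overline{G}(W)$: I must show that once the variations $\overline{\partial}_{\Teich} b$ and $\partial_{\Teich} \overline{b}$ and the first-order pieces coming from $\nabla_{G(V) \cdot \dif F}$ are added in, every symbol of positive order disappears. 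This is precisely where the defining relation $2 i \partial \overline{\partial} F = \rho - \lambda \omega$ is used: commuting covariant derivatives produces the curvature, hence the Ricci form $\rho$, which then combines with $\lambda \omega$ and the $V[F]$ terms so that only a scalar survives.

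Having shown that each bidegree component of $F_{\HWC}$ takes values in differential operators of order zero, i.e. multiplication by functions on $\ModSp$, I would conclude that $F_{\HWC}$ is central, which is the asserted projective flatness. The hypotheses $H^{1}(\ModSp, \R) = 0$ and the absence of non-trivial holomorphic functions enter when fixing the Ricci potential and in identifying the surviving terms as genuinely central. Throughout, the real technical burden is the symbol bookkeeping: verifying that the third-order symbols vanish by holomorphicity and rigidity, and that the second- and first-order symbols cancel against $\dT b$, $\dT \overline{b}$ and the Ricci identity — in effect Hitchin's lemmas transported to the present setting, with the factors $\rec{t}$ and $\rec{\overline{t}}$ carried along.
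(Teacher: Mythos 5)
First, a point of comparison: the paper itself does not prove this theorem --- it is imported from \cite{Wit89,AG14}, with the remark that projective flatness follows from Hitchin's computation for the $\SU(n)$ case \cite{Hit90}. Your plan is essentially a reconstruction of that cited argument: bidegree decomposition of the curvature of $\nablatr + \rec{2t} b - \rec{2\overline{t}} \overline{b} + \dT F$, rigidity forcing the top symbol of $[b \wedge b]$ to collapse (this is exactly the centrality statement the present paper quotes as Proposition 4.6 of \cite{AG14}), and the Ricci relation $2i \partial \overline{\partial} F = \rho - \lambda \omega$ governing the mixed $(1,1)$ part, which is where $[b \wedge \overline{b}]$ lives (cf.\ Proposition 4.9 of \cite{AG14}). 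So in approach you are aligned with the source. But be aware that as written this is a roadmap rather than a proof: every cancellation of second- and first-order symbols is announced (``I would then check\dots'') rather than carried out, and those cancellations are precisely where Hitchin's lemmas do the nontrivial work; nothing in your sketch could be spliced in as a self-contained argument.

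There is also one genuine conceptual gap: ``differential operators of order zero, i.e.\ multiplication by functions on $\ModSp$'' is \emph{not} the same as central. In the algebra $\DiffD$ the centre consists of multiplication by constants: an order-zero operator, multiplication by $f$, commutes with $\nabla_X$ only if $X[f] = 0$ for all vector fields $X$. So showing that each bidegree component of $F_{\HWC}$ has vanishing symbols in all positive orders proves only that the curvature is a function-valued $2$-form on $\Teich$; projective flatness additionally requires that this function be constant on $\ModSp$, so that $F_{\HWC}$ acts as a scalar times the identity. Your closing sentence gestures at this --- and indeed this is where the hypothesis of no non-trivial holomorphic functions (together with $H^1(\ModSp,\R) = 0$ and compactness, which pin down the Ricci potential) genuinely enters, by showing the surviving zero-order term is holomorphic, hence constant --- but your stated conclusion ``order zero, hence central'' is a non sequitur as it stands. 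To close the argument you must identify the surviving zero-order term explicitly (it is assembled from $V[F]$-type terms, the Ricci data and the metric variation) and prove its constancy on $\ModSp$; this final identification is not optional bookkeeping but the step that turns ``curvature of low order'' into the asserted projective flatness.
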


Notice that, as a consequence of this, $\EHWC$ is in fact flat.

If $\Sigma$ is a closed, oriented, smooth surface of genus $g \geq 2$, (the smooth part of) the moduli space $\ModSp^{n,d}$ carries a symplectic structure given by the Narasimhan-Atiyah-Bott-Goldman form.
The Teichmüller space $\Teich$ parametrises a smooth family of complex structures on $\ModSp^{n,d}$.
Under the identification of $\Tan_{[A]} \ModSp^{n,d}$ with the twisted cohomology group $H^{1}_{A} (\Sigma,\lalg{su}(n))$, the complex structure corresponding to $\sigma \in \Teich$ corresponds to the Hodge $*$-operator.
Excluding the special case of $g = n = 2$ and $d$ even, the known properties of $\ModSp^{n,d}$ ensure that the hypotheses listed above are verified, thus leading to the Hitchin-Witten connection of~\cite{Wit91}.

\section[Meaning of the convergence]{Meaning of the convergence of the asymptotic expansions}
	\label{sec:convergence}

	The asymptotic results in~\cite{Sch00,And12} are phrased in terms of the $\Lint^2$ operator norm, which makes good sense in the context of the finite-dimensional spaces of holomorphic sections.
	In the situation at hand, however, we need to consider the space of all smooth sections, on which differential operators are typically unbounded with respect to the $\Lint^2$ norm.
	
	One natural way around this would be to use Sobolev norms, but some care is needed, as their definition for sections of smooth bundles relies in general on various choices, in an essential way.
	As a matter of fact, the resulting norms are equivalent but different, so what is intrinsically well defined is the Sobolev topology alone.
	In order to make sense of the comparison of norms of operators acting on different spaces some normalisation might be needed; we shall not address this problem here.
	However, if the asymptotic limit is taken for $s \to \infty$ while keeping $k$ fixed, the space on which the operators are acting is also fixed, so one can pick a choice and consistently carry out the limiting process with respect to it.
	
	Alternatively, one can use strong convergence and say that an operator $D$ decays at a given rate if the $\Lint^{2}$ norm of $D \psi$ does, for every smooth section $\psi$ with $\norm{\psi} = 1$.
	Of course this approach still requires that the bundle, hence $k$, is fixed, and it does not define a norm for the operators.
	On the other hand, it has the advantage of carrying an intrinsic meaning in terms of the Hilbert space structure relevant for geometric quantisation.
		
	Another way to phrase the matters of convergence is via the symbols of differential operators, see Appendix~\ref{sec:symbols}.
	If $D$ is a finite-order differential operator acting on sections of $\Lqnt^{k}$, its total symbol $\sigma(D)$ is a formal (i.e. not necessarily homogeneous) tensor field on $\ModSp$, whose $\Lint^{\infty}$ norm is defined via the Riemannian metric.
	Since the correspondence between differential operators and totally symmetric tensor fields is a bijection, this gives a norm $\norm{\cdot}_{T}$ on the space of operators, which makes sense independently on $t$.
	
	Suppose now that $k$ is fixed, and that an operator $D$ depends on $s$ as a Laurent polynomial, i.e. $D$ has the form
	\begin{equation*}
		D = \sum_{n = N_{0}}^{N_{1}} D^{(n)} s^{-n} \, ,
	\end{equation*}
	where each $D^{(n)}$ is independent of $s$.
	One can then attempt to argue that each of the norms considered above is bounded by $C \abs{s}^{-N_{0}}$ for some positive real constant $C$.
	Whenever this is the case, we shall write
	\begin{equation*}
		D = o \bigl( \abs{s}^{-\alpha} \bigr)  \quad \text{for $s \to \infty$,}
	\end{equation*}
	for every $\alpha < N_{0}$, without any further reference to the norm.
	In a similar fashion, if $D$ depends on $t$ and its symbol can be expressed as a Laurent polynomial in $t$ and $\overline{t}$, then $\norm{D}_{T}$ is also bounded by a power of $\abs{t}$; we shall write
	\begin{equation*}
		D = o \bigl( \abs{t}^{-\alpha} \bigr) \quad \text{for $t \to \infty$.}
	\end{equation*}

	In view of the formal approach used in the next section, and in order to make the above statements even more precise, it is convenient to introduce the following graded algebras.
	
	\begin{Definition}
		\label{def:DiffA}
		For each fixed $k \in \Z_{>0}$ let $\DiffD$ be the algebra of finite-order differential operators acting on $\smth(\ModSp,\Lqnt^k)$, and denote
		\begin{equation*}
			\DiffA := \DiffD[[\inv{s}]]
		\end{equation*}
		regarded as a graded algebra.
	\end{Definition}
	
	As endomorphisms of $\DiffA$ we shall only consider those arrsing as formal power series with coefficients in $\End(\DiffD)$.
	In the setting of the formal Hitchin connection, all the transformations of Toeplitz operators are obtained by acting on the function defining them as differential operators.
	Similarly, we shall require that the coefficients of the endomorphisms of $\DiffA$ should act as differential operators on their symbols.
	
%
	
\section{The formal approach}
	\label{sec:FHWC}

	\subsection{The formal Hitchin-Witten connection}

	If $D$ depends smoothly on $\sigma$, its Hitchin-Witten covariant derivative reads
	\begin{equation*}
		\EHWC D = \dF D + \rec{2t} [ b , D ] - \rec{2 \overline{t}} [\overline{b} , D ] \, .
	\end{equation*}
	We now study the existence of a formal connection on $\DiffA \times \Teich$ reproducing this covariant derivative asymptotically.
	\begin{Definition}
		By a formal connection on $\DiffA \times \Teich$ we mean a sum
		\begin{equation*}
			\FHWC = \nabla^{F} + \sum_{l=1}^{\infty} \FHWC^{(l)} s^{-l} \, ,
		\end{equation*}
		where each $\FHWC^{(l)}$ is a $1$-form on $\Teich$ with values in $\End( \DiffD)$.
	\end{Definition}
	
	\begin{proof}[Proof of Theorem~\ref{thm:formal_connection}]
		First of all, consider the Taylor expansions of $\inv{t}$ and $\inv{\overline{t}}$ in $s$ at $s = \infty$ and obtain
		\begin{equation}
			\label{eq:taylor}
			\rec{t} = \rec{k + is} = - \rec{k} \sum_{n=1}^{\infty} \left( \frac{ik}{s} \right)^n \, ,
				\qquad
			\rec{\bar{t}} = \rec{k - is} = - \rec{k} \sum_{n=1}^{\infty} \left( - \frac{ik}{s} \right)^n \, .
		\end{equation}
		These converge for $\abs{s} > k$; in particular the error of each $L$-th truncated sum decays faster than $\abs{s}^{-L}$ for $s \to \infty$, since $k$ is fixed.
		Based on this, we choose
		\begin{equation*}
			\FHWC^{(l)} (D) : = - \frac{(ik)^{l}}{2k} \left[ b - (-1)^l \overline{b} , D \right] \, .
		\end{equation*}
		As a consequence of the convergence of~\eqref{eq:taylor}, for any positive integer $L$ one can notice that for any norm $\norm{\cdot}$ one has
		\begin{equation*}
			\begin{aligned}
				& \biggl\| \EHWC_{V} (D) - \nabla^{F}_{V} (D) - \sum_{l = 1}^{L} \FHWC^{(l)}_{V} s^{-l} (D) \biggr\| \leq \\[.4em]
				\leq{} & \Biggl\| \! \Biggl( \rec{2t} \!+\! \rec{2k} \sum_{l=1}^{L} {\biggl( \frac{ik}{s} \biggr)\!\!}^l \Biggr) \! \bigl[ b (V) , D \bigr] \Biggr\| \!
				+ \Biggl\| \! \Biggl( \rec{2\overline{t}} \!+\! \rec{2k} \sum_{l=1}^{L} {\biggl( - \frac{ik}{s} \biggr)\!\!}^l \Biggr) \! \bigl[ \overline{b} (V) , D \bigr] \Biggr\| \! = \\[.4em]
				={}& o \bigl( \abs{s}^{ - L} \bigr) \Bigl\|\bigl[ b , D \bigr] \Bigr\| + o \bigl( \abs{s}^{ - L} \bigr) \Bigl\|\bigl[ \overline{b} , D \bigr] \Bigr\| = o \bigl( \abs{s}^{ - L} \bigr) \, .
			\end{aligned}
		\end{equation*}
		This goes verbatim for the case when all the operators are applied to a smooth section $\psi$, proving the existence.
		The uniqueness is implied by the asymptotic condition~\eqref{eq:FHWC_asymptotic}.
		Indeed, if two such formal connections $\FHWC$ and $\FHWC'$ are given, their $0$-order terms agree by assumption.
		Suppose, on the other hand, that $\FHWC^{(L)} \ne \FHWC'^{(L)}$ for some $L$, which we assume to be minimum.
		Then for every $D \in \DiffD$ one has
		\begin{equation*}
			\begin{split}
				\norm{ \bigl( \FHWC^{(L)} - \FHWC'^{(L)} \bigr) (D) } s^{-L} 
				\leq &\ \biggr\| \biggl( \EHWC_{V} - \nabla^F_{V} - \sum_{l = 1}^{L} \FHWC^{(l)}_{V} s^{-l} \biggr) (D) \biggr\| + \\
				+ &\ \biggl\| \biggl( \EHWC_{V} - \nabla^F_{V} - \sum_{l = 1}^{L} \FHWC'^{(l)}_{V} s^{-l} \biggr) (D) \biggr\| \, .
			\end{split}
		\end{equation*}
		The last expression decays faster than $s^{-L}$, but since $\bigl\lVert \bigl( \FHWC^{(L)} - \FHWC'^{(L)} \bigr) (D) \bigr\rVert$ does not depend on $s$, it has to be zero for every $D$, which contradicts $\FHWC^{(L)} \ne \FHWC'^{(L)}$.
		
		Flatness can be proven in a similar fashion.
		Indeed, the curvature of $\FHWC$ is expressed by
		\begin{equation*}
			\sum_{l = 1}^{\infty} \biggl( \dF \FHWC^{(l)} + \rec{2} \sum_{n + m = l} \left[ \FHWC^{(n)} \wedge \FHWC^{(m)} \right] \biggr) s^{-l} \, .
		\end{equation*}
		More explicitly, the $l$-th coefficient of its action on operators is given by the commutator with
		\begin{equation*}
			- \frac{(ik)^{l}}{2k} \dF \big( b - (-1)^{l} \overline{b} \big) + \frac{(ik)^{l}}{8k^2} \sum_{n + m = l} \left[ \left( b - (-1)^{n} \overline{b} \right) \wedge \left( b - (-1)^{m} \overline{b} \right) \right] .
		\end{equation*}
		Since $[b \wedge b]$ and $[ \overline{b} \wedge \overline{b} ]$ take values in central differential operators (see Proposition 4.6 in~\cite{AG14}), the whole curvature is
		\begin{equation}
			\label{eq:formal_curvature}
			\sum_{l = 1}^{\infty}  \biggl( - \frac{(ik)^{l}}{2k} \dF \big( b - (-1)^{l} \overline{b} \big) + \frac{(ik)^{l}}{4k^2} \!\! \sum_{n + m = l} \!\! (-1)^n \left[ b \wedge \overline{b} \right] \biggr) s^{-l} \, .
		\end{equation}
		For comparison, the curvature of $\EHWC$ is given by the commutator with
		\begin{equation}
			\label{eq:exact_curvature}
			\dF \left( \rec{2t} b - \rec{2 \overline{t}} \overline{b} \right) - \rec{4 \abs{t}^2} [ b \wedge \overline{b} ] \, .
		\end{equation}
		It can be seen that the coefficients in~\eqref{eq:formal_curvature} give the Laurent expansions of those in~\eqref{eq:exact_curvature} at $s = \infty$.
		Similar arguments to those used to prove the existence of $\FHWC$, combined with the flatness of $\EHWC$, imply that for every positive integer $L$ and every operator $D$ one has
		\begin{equation*}
			\Biggl[ \sum_{l = 1}^{L} \biggl( \frac{(ik)^{l}}{2k} \dF \big( b - (-1)^{l} \overline{b} \big) - \frac{(ik)^{l}}{4k^2} \sum_{n + m = l} (-1)^n \bigl[ b \wedge \overline{b} \bigr] \biggr) s^{-l} , D \Biggr] \!=\! o(s^{-L}) .
		\end{equation*}
		On the other hand, this expression depends on $s$ as a Laurent polynomial of order at most $L$, hence it vanishes.
		Therefore, all the truncations of the curvature of $\FHWC$ are zero, which proves the flatness.
	\end{proof}
	
	Of course the result on the curvature may also be proven by direct application of the algebraic relations found in~\cite{AG14} in the process of proving the projective flatness of the Hitchin and Hitchin-Witten connection.
	
	We emphasise that the symbols of $\FHWC^{(l)}_{V} (D)$ depend as differential operators on those of $D$, so this connection acts on the fibres by endomorphisms of $\DiffA$ of the kind described after Definition~\ref{def:DiffA}.
	
	\begin{Definition}
		We shall refer to the connection of Theorem~\ref{thm:formal_connection} as the formal Hitchin-Witten connection, and indicate it with the notation $\FHWC$.
	\end{Definition}

\subsection{The recursion by differential equations}

	We now consider an operator $D$ depending smoothly on $\sigma$, and look for an asymptotically $\EHWC$-parallel expansion
	\begin{equation}
		\label{eq:formal_series}
		\mathcal{R} (D) := \sum_{l = 0}^{\infty} \Sexp (D) s^{-l} \, ,
	\end{equation}
	where $\Sexp[0] (D)$ should be $D$.
	We shall pose the asymptotic requirement that, for every vector $V$ on $\Teich$ and every positive integer $L$, the covariant derivative of the $L$-th truncation of the series should decay faster than $s^{-L}$, or explicitly
	\begin{equation}
		\label{eq:convergence}
		\EHWC \biggl( \sum_{l = 0}^{L} \Sexp (D) s^{-l} \biggr) = o(s^{-L}) \, .
	\end{equation}
	Notice that such an expansion cannot exist unless $\dF D = 0$.
	Indeed, this is the only contribution to the covariant derivative of degree $0$ in $s$, so it cannot be counter-balanced by any other terms.
	We shall then assume from now on that this condition is indeed satisfied.
	Notice that, in the case of a curve operator $\curveop$, this is equivalent to $f$ being independent of $\sigma$.
	
	We set the problem in terms of the formal Hitchin-Witten connection by using the following fact.
	
	\begin{Proposition}
		\label{lemma:asymptotic}
		The asymptotic condition~\eqref{eq:convergence} is satisfied if and only if, as a formal power series, \eqref{eq:formal_series} is covariantly constant with respect to $\FHWC$.
	\end{Proposition}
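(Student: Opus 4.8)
The plan is to reduce both sides of the claimed equivalence to the single family of $s$-free identities furnished by the recursion~\eqref{eq:introrec}, and to let the Taylor expansion~\eqref{eq:taylor} of $1/t$ and $1/\overline{t}$ at $s=\infty$ serve as the bridge between the exact operator $\EHWC$ and the formal connection $\FHWC$. Throughout I write $R_L := \sum_{l=0}^{L} \Sexp(D) s^{-l}$ for the $L$-th truncation of~\eqref{eq:formal_series}, and I abbreviate by $C_p := \dF \Sexp[p](D) + \sum_{n=1}^{p}\FHWC^{(n)}(\Sexp[p-n](D))$ the $p$-th recursion expression, so that $C_p = 0$ is exactly the $p$-th step of~\eqref{eq:introrec} (with $C_0 = \dF D$, already assumed to vanish).

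First I would apply $\EHWC$ to $R_L$. Since the scalars $s^{-l}$ commute with $\EHWC$, this equals $\sum_{l=0}^{L}(\EHWC \Sexp(D)) s^{-l}$, and feeding in $\EHWC E = \dF E + \tfrac{1}{2t}[b,E] - \tfrac{1}{2\overline{t}}[\overline{b},E]$ together with~\eqref{eq:taylor}, the two bracket terms reassemble, by the very definition of $\FHWC^{(n)}$, into the convergent series $\sum_{n\geq 1}\FHWC^{(n)}(\Sexp(D)) s^{-n}$. Collecting powers of $s^{-1}$ then yields the key identity
\[
	\EHWC R_L = \sum_{p=0}^{L} C_p\, s^{-p} + \bigl(\text{tail}\bigr),
\]
in which the coefficient of $s^{-p}$ for $0 \leq p \leq L$ is precisely $C_p$, while the tail gathers all contributions of order $s^{-(L+1)}$ and lower.

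The step I expect to require the most care is the control of this tail, which is where the finiteness of $k$ genuinely enters. The tail is nothing but the interaction of the truncation with the Taylor remainder of~\eqref{eq:taylor}, so it is a convergent series for $\abs{s} > k$ whose lowest power of $s^{-1}$ is $L+1$; the norm estimate already carried out in the proof of Theorem~\ref{thm:formal_connection} shows it is $o(s^{-L})$, and summing over the finitely many $l \in \{0,\dots,L\}$ preserves this. Granting this, the identity reads $\EHWC R_L = \sum_{p=0}^{L} C_p s^{-p} + o(s^{-L})$ with each $C_p$ independent of $s$. Hence~\eqref{eq:convergence} for this $L$ is equivalent to $\sum_{p=0}^{L} C_p s^{-p} = o(s^{-L})$; and since a polynomial in $s^{-1}$ supported in degrees $0,\dots,L$ can decay faster than $s^{-L}$ only if it vanishes identically (its lowest nonvanishing coefficient would otherwise dominate, contradicting the conventions of Section~\ref{sec:convergence}), this forces $C_0 = \dots = C_L = 0$, i.e. the first $L$ steps of~\eqref{eq:introrec}. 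Letting $L$ range over all positive integers, \eqref{eq:convergence} holds for every $L$ if and only if $C_p = 0$ for every $p$.

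It then remains to match the other side of the equivalence. Expanding $\FHWC = \nabla^{F} + \sum_{m\geq 1}\FHWC^{(m)} s^{-m}$ against $\mathcal{R}(D) = \sum_{l}\Sexp(D) s^{-l}$ as formal power series and taking the Cauchy product gives $\FHWC \mathcal{R}(D) = \sum_{p\geq 0} C_p s^{-p}$, with the very same coefficients $C_p$. Thus $\mathcal{R}(D)$ is covariantly constant for $\FHWC$, meaning $\FHWC\mathcal{R}(D)=0$ coefficient by coefficient, precisely when $C_p = 0$ for all $p$. Chaining the two reductions closes the equivalence, and apart from the tail estimate the argument is the bookkeeping already implicit in Theorem~\ref{thm:formal_connection}.
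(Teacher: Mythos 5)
Your proposal is correct and follows essentially the same route as the paper: apply $\EHWC$ to the truncation, invoke the asymptotic property of $\FHWC$ established in Theorem~\ref{thm:formal_connection} (which is where your Taylor-remainder/tail estimate already lives) to replace it by the truncated formal covariant derivative up to $o(s^{-L})$, and compare coefficients. The only difference is presentational: you spell out the final step that a Laurent polynomial in $\inv{s}$ supported in degrees $0,\dots,L$ which is $o(s^{-L})$ must vanish, which the paper leaves implicit in ``the assertion follows'' (the same reasoning appears in the uniqueness part of the proof of Theorem~\ref{thm:formal_connection}).
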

	
	\begin{proof}
		Written explicitly, the formal covariant derivative of a power series as in~\eqref{eq:formal_series} reads
		\begin{equation}
			\label{eq:formal_derivative}
			\FHWC \bigl( \mathcal{R} (D) \bigr) = \sum_{l = 0}^{\infty} \Bigl( \dF \Sexp (D) + \sum_{n = 1}^{l} \FHWC^{(n)} \bigl( \Sexp[l-n] (D) \bigr) \Bigr) s^{-l} \, .
		\end{equation}
		
		Let now $V$ and $L$ be fixed.
		By the defining property of the Hitchin-Witten connection, one has
		\begin{equation*}
			\begin{split}
				\EHWC_{V} \biggl( \sum_{l = 0}^{L} \Sexp (D) s^{-l} \biggr) ={}& \nabla^{F}_{V} \biggl( \sum_{l = 0}^{L} \Sexp (D) s^{-l} \biggr) + \\
				& + \sum_{n = 1}^{L} \FHWC^{(n)}_{V} \biggl( \sum_{l = 0}^{L} \Sexp (D) s^{-l} \biggr) s^{-n} + o(s^{-L}) \, .
			\end{split}
		\end{equation*}
		All the terms of degree higher than $L$ in $\inv{s}$ on the right-hand side may be absorbed in $o(s^{-L})$ and disregarded.
		After rearranging the others by gathering the terms with the same degree, one obtains the expression
		\begin{equation*}
			\sum_{l = 0}^{L} \biggl( \nabla^{F}_{V} \Sexp (D) + \sum_{n = 1}^{l} \FHWC^{(l)}_{V} \bigl( \Sexp[l-n] (D) \bigr) \biggr) s^{-l} + o(s^{-L}) \, .
		\end{equation*}
		By comparison with~\eqref{eq:formal_derivative}, this sum expresses the truncation of the formal Hitchin-Witten covariant derivative of $\mathcal{R}(D)$.
		The assertion follows.
	\end{proof}
	
	Using the explicit definition of $\FHWC$ in~\eqref{eq:formal_derivative}, the condition for the series to be covariantly constant becomes
	\begin{equation*}
		\sum_{l = 0}^{\infty} \dF \big( \Sexp (D) \big) s^{-l} = \rec{2k} \sum_{n=1}^{\infty} \sum_{l=0}^{\infty} \left( \frac{ik}{s} \right)^n \Big[ b - (-1)^{n} \overline{b} , \Sexp (D) \Big] s^{-l} \, .
	\end{equation*}
	By collecting the coefficients of each power of $\inv{s}$ on the right-hand side one can finally reduce this to
	\begin{equation}
		\label{eq:recursion}
		 \dF \Sexp[l] (D) = \rec{2k} \sum_{n = 1}^{l} \left( ik \right)^n \left[ b - (-1)^{n} \overline{b} , \Sexp[l - n] (D) \right] \, .
	\end{equation}
	This implies in particular that the right-hand side should be an exact form with respect to $\dF$, which gives a necessary condition on the solution of the first $l$ steps in order for the next one to exist.
	An obstruction comes then from the differential of the right-hand side; the next result, which is a re-phrasing of Theorem~\ref{thm:no_Sobstruction}, shows that this obstruction vanishes.
	
	\begin{Proposition}
		\label{prop:no_Sobstruction}
		Suppose that $l$ steps of the recursion have been solved, giving an $\Sexp[n] (D)$ for every $n \leq l$.
		Then
		\begin{equation*}
			\dF \Biggl( \sum_{n = 1}^{l} \left( ik \right)^n \left[ b - (-1)^{n} \overline{b} , \Sexp[l - n] (D) \right] \Biggr) = 0
		\end{equation*}
	\end{Proposition}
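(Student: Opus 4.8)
My plan is to read off the closedness of the step-$l$ right-hand side $\Obs$ directly from the flatness of the formal connection $\FHWC$ proven in Theorem~\ref{thm:formal_connection}, rather than differentiate $\Obs$ by hand. The guiding idea is that the recursion~\eqref{eq:recursion} is, order by order, precisely the requirement that a truncation of $\mathcal{R}(D)$ be formally $\FHWC$-parallel; the failure of the next step to close up must therefore surface as a coefficient of $\FHWC^{2}$, which vanishes identically by flatness, and this coefficient will turn out to be exactly $\dF \Obs$.

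First I would assemble the truncation $\Psi := \sum_{m=0}^{l-1} \Sexp[m](D)\, s^{-m} \in \DiffA$ out of the operators produced by the already-solved steps. Using the explicit formal derivative~\eqref{eq:formal_derivative}, the coefficient of $s^{-p}$ in $\FHWC \Psi$ is $\dF \Sexp[p](D) + \sum_{n=1}^{p} \FHWC^{(n)}\bigl( \Sexp[p-n](D) \bigr)$. For $p \le l-1$ this vanishes, since $\Sexp[0](D), \dots, \Sexp[l-1](D)$ satisfy~\eqref{eq:recursion} through step $l-1$; for $p = l$ the term $\dF \Sexp[l](D)$ is simply absent, because $\Sexp[l](D)$ has not been defined, and what remains is $\sum_{n=1}^{l} \FHWC^{(n)}\bigl( \Sexp[l-n](D) \bigr)$. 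By the definition of $\FHWC^{(n)}$ this equals $-\Obs$. Hence $\FHWC \Psi = -\Obs\, s^{-l}$ plus a formal series supported in the orders $s^{-l-1}, s^{-l-2}, \dots$.

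Next I would apply $\FHWC$ a second time and use $\FHWC^{2} = 0$. The coefficient of $s^{-l}$ in $\FHWC(\FHWC \Psi)$ can receive a contribution only from $\dF$ acting on the lowest coefficient of $\FHWC \Psi$: every $\FHWC^{(n)}$ with $n \ge 1$ strictly raises the order in $s^{-1}$, and $\FHWC \Psi$ has no term below $s^{-l}$, so the higher-order tail cannot feed back into order $s^{-l}$. That coefficient is therefore $-\dF \Obs$, and flatness forces it to vanish, giving $\dF \Obs = 0$. Since the bracket sum displayed in the statement is $2k\, \Obs$, this is exactly the claimed identity.

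The whole argument is bookkeeping once flatness is available, and the reason I would route through $\FHWC^{2} = 0$ rather than expand $\dF \Obs$ termwise is the one genuine subtlety: $\dF$ is a twisted differential, and left multiplication by $\dT F$ is not a derivation of the operator bracket, so the naive graded Leibniz rule for $\dF[\,\cdot \wedge \cdot\,]$ fails by a central correction. A hands-on computation would have to reintroduce those corrections and cancel them against the relations of~\cite{AG14} — the centrality of $[b \wedge b]$ and $[\overline{b} \wedge \overline{b}]$ and the order-by-order content of the curvature identity~\eqref{eq:exact_curvature} — but these are precisely the ingredients already spent in proving that $\FHWC$ is flat. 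Invoking flatness thus packages them once and for all; the only inputs left to check are that $\FHWC$ is flat as a genuine (not merely projective) connection, which is Theorem~\ref{thm:formal_connection}, and that the first $l-1$ steps are solved, which is what guarantees $\FHWC \Psi$ begins at order $s^{-l}$.
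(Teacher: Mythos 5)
Your argument is correct, but it is organised quite differently from the paper's. The paper proves Proposition~\ref{prop:no_Sobstruction} by hand: it expands $\dF \Obs$ with the graded Leibniz rule, substitutes the recursion~\eqref{eq:recursion} into the terms $\bigl[ (b - (-1)^{n}\overline{b}) \wedge \dF \Sexp[l-n](D) \bigr]$, and cancels what results against the terms $\bigl[ \dF (b - (-1)^{n}\overline{b}) , \Sexp[l-n](D) \bigr]$ using the Jacobi identity of Lemma~\ref{lemma:twisteddiff}, the centrality of $[b \wedge b]$, $[\overline{b} \wedge \overline{b}]$ and $[b \wedge \overline{b}]$, and the coefficient-wise flatness identity for $\FHWC$ — so flatness enters there too, but only as one ingredient in an explicit cancellation. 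You instead form the truncation $\Psi$, observe that $\FHWC \Psi = - \Obs\, s^{-l}$ plus terms of order $s^{-l-1}$ and higher, and read $\dF \Obs = 0$ off the $s^{-l}$ coefficient of $\FHWC^{2} \Psi = 0$. Your bookkeeping is accurate (the coefficients below order $s^{-l}$ vanish by the solved steps $0,\dots,l-1$; each $\FHWC^{(n)}$ raises the $\inv{s}$-degree, so the tail cannot feed back; and you correctly track the normalisation $2k\,\Obs$), and there is no circularity, since the paper's proof of flatness in Theorem~\ref{thm:formal_connection} is independent of this proposition. Your route is shorter and more conceptual, packaging all the algebraic relations of~\cite{AG14} once through flatness; it also makes transparent why the analogous recursion in $t$, $\overline{t}$ of Section~\ref{sec:t_obstruction} fails, since there the curvature does not vanish degree by degree in the grading where $\inv{t}$ and $\inv{\overline{t}}$ both have degree one ($\dF \tilde{b}$ sits in degree one while the $[b \wedge \overline{b}]$ term sits in degree two). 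To be complete you should record two routine facts you invoke implicitly: that $\FHWC$ extends to an exterior covariant derivative on $\DiffA$-valued forms whose square is the bracket action of the curvature, and that $\dF^{2} = 0$ on operator-valued forms (which holds because $\dT F$ is valued in mutually commuting multiplication operators).

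One side remark in your last paragraph is wrong, although it does not damage the argument since it only motivates your choice of route: $\dF$ \emph{does} satisfy the naive graded Leibniz rule on operator-valued forms. On such forms the twist by $\dT F$ acts through the commutator (this is how $\EHWC$, hence $\dF$, is induced on endomorphisms), and the adjoint action of a $1$-form is a graded derivation of the bracket by the Jacobi identity of Lemma~\ref{lemma:twisteddiff}; indeed the paper's own proof opens precisely with this expansion in~\eqref{eq:Sobstruction}, with no central correction appearing. So the termwise computation is available after all — your approach is preferable for economy, not out of necessity.
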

	
	\begin{proof}
		We proceed by using the properties of the exterior differential to expand
		\begin{equation}
			\label{eq:Sobstruction}
			\begin{gathered}
				\dF \Biggl( \sum_{n = 1}^{l} \left( ik \right)^n \! \left[ b - (-1)^n \overline{b} , \Sexp[l-n] (D) \right] \Biggr) =	\\
				= \! \sum_{n = 1}^{l} \! \left( ik \right)^n \! \biggl( \! \left[ \dF \! \left( b \!-\! (\!-1)^n \overline{b} \right) \!,\! \Sexp[l-n] (D) \right]
				\!\!+\!\! \left[ \! \left( b \!-\! (\!-1)^n \overline{b} \right) \! \wedge \! \dF \Sexp[l-n] (D)\right] \! \biggr) .
			\end{gathered}
		\end{equation}
		Using the recursive relation on the second term in the parentheses, replacing $j = m - n$ and then carefully exchanging the sums one obtains
		\begin{equation*}
			\begin{split}
				&\ \rec{2k} \sum_{n = 1}^{l} \sum_{j = 1}^{l-n} (ik)^{n+j} \Bigl[ \bigl( b - (-1)^n \overline{b} \bigr) \wedge \Bigl[ \bigl( b - (-1)^{j} \overline{b} \bigr) \wedge \Sexp[l-n-j] (D) \Bigl] \Bigr] = \\
				= &\ \rec{2k} \sum_{n = 1}^{l} \sum_{m = n+1}^{l} (ik)^{m} \Bigl[ \bigl( b - (-1)^{n} \overline{b} \bigr) \wedge \Bigl[ \bigl( b - (-1)^{m - n} \overline{b} \bigr) \wedge \Sexp[l - m] (D) \Bigr] \Bigr] = \\
				= &\ \rec{2k} \sum_{m = 2}^{l} \sum_{n = 1}^{m - 1} (ik)^{m} \Bigl[ \bigl( b - (-1)^{n} \overline{b} \bigr) \wedge \Bigl[ \bigl( b - (-1)^{m - n} \overline{b} \bigr) \wedge \Sexp[l - m] (D) \Bigr] \Bigr] \, .
			\end{split}
		\end{equation*}
		Except for the factor $(ik)^{m}$, each term of this sum can be expanded as
		\begin{equation}
			\label{eq:Sobstruction2nd}
			\begin{gathered}
				\Bigl[ b \wedge \Bigl[ b \wedge \Sexp[l-m] (D) \Bigr] \Bigr] + (-1)^{m} \Bigl[ \overline{b} \wedge \Bigl[ \overline{b} \wedge \Sexp[l-m] (D) \Bigr] \Bigr] + \\
				- ( -1)^{n} \Bigl( (-1)^{m} \Bigl[ b \wedge \Bigl[ \overline{b} \wedge \Sexp[l-m] (D) \Bigr] \Bigr] + \Bigl[ \overline{b} \wedge \Bigl[ b \wedge \Sexp[l-m] (D) \Bigr] \Bigr] \Bigr) \, .
			\end{gathered}
		\end{equation}
		The Jacobi identity of Lemma~\ref{lemma:twisteddiff} implies that
		\begin{equation*}
			\Bigl[ b \wedge \bigl[ b \wedge \Sexp[l-m] (D) \bigr] \Bigr] - \Bigl[ b \wedge \bigl[ \Sexp[l-m] (D) \wedge b \bigr] \Bigr] + \Bigl[ \Sexp[l-m] (D) \wedge \bigl[ b \wedge b \bigr] \Bigr] = 0 \, .
		\end{equation*}
		By the centrality of $[ b \wedge \overline{b}]$ and the commutation rules for these forms, this boils down to the vanishing of the left-most term, showing that the first term in~\eqref{eq:Sobstruction2nd} is $0$.
		It can be argued in the same way that the second term gives no contribution either.
		Due to the factor $(-1)^n$ in the remaining terms, the sum over $n$ yields $0$ whenever the summation range has even length, i.e. when $m$ is odd.
		On the other hand, for even $m$ the Jacobi identity gives
		\begin{equation*}
			\Bigl[ b \!\wedge\! \bigl[ \overline{b} \!\wedge\! \Sexp[l-m] (D) \bigr] \Bigr] \!+\! \Bigl[ \overline{b} \!\wedge\! \bigl[ b \wedge \Sexp[l-m] (D) \bigr] \Bigr] \!=\! - \! \Bigl[ \bigl[ b \wedge \overline{b} \bigr] \wedge \Sexp[ l - m ] (D) \Bigr] .
		\end{equation*}
		From this we conclude that the second part ot the sum in~\eqref{eq:Sobstruction} equals
		\begin{equation}
			\label{eq:Sobstruction_part2}
			- \rec{2k} \sum_{0 < 2r \leq l} (ik)^{2r} \Bigl[ \bigl[ b \wedge \overline{b} \bigr] \wedge \Sexp[ l - 2r ] (D) \Bigr] \, .
		\end{equation}
		
		On the other hand, the first part of~\eqref{eq:Sobstruction} can be handled by comparison with the equation expressing the flatness of $\FHWC$.
		Applying the curvature to $\Sexp[ l -n ] (D)$ and isolating the term in $s^{-n}$ gives
		\begin{equation*}
			\biggl[ d_{\Teich}^{F} ( b - \! (-1)^n \overline{b} ) \!+\! \rec{4k} \sum_{m = 1}^{n-1} \! \left[ \left( b - (-1)^{m} \overline{b} \right) \!\wedge\! \left( b - (-1)^{n-m} \overline{b} \right) \right] , \Sexp[ l - n ] (D) \biggr] = 0 \, .
		\end{equation*}
		Using the centrality of $[b \wedge b]$ and $[\overline{b} \wedge \overline{b}]$ one obtains that the first part of~\eqref{eq:Sobstruction} is
		\begin{equation*}
			\rec{4k} \sum_{n = 1}^{l} \sum_{m = 1}^{n-1} (ik)^{n} (-1)^{m} \left( 1 + (-1)^{n} \right) \left[ \left[ b \wedge \overline{b} \right] , \Sexp[ l - n ] (D) \right] \, .
		\end{equation*}
		As before, the sum over $m$ gives $0$ whenever $n-1$ is even, leaving
		\begin{equation*}
			\rec{2k} \sum_{0 < 2r \leq l} (ik)^{2r} \Bigl[ \bigl[ b \wedge \overline{b} \bigr] , \Sexp[ l - n ] (D) \Bigr] \, .
		\end{equation*}
		The proof is concluded by comparing with~\eqref{eq:Sobstruction_part2}.
	\end{proof}
	
	As a side remark we notice that, if $D$ is a differential operator of order $n$, then $[b \pm \overline{b} , D]$ takes values in operator of order $n+1$, generically.
	This justifies our claim that an asymptotic expansion for curve operators with coefficients of the same kind need not exist, and one should look for $\Sexp(f)$ as a differential operator of order $l$.
	
	\subsection{First step of the recursion for curve operators}
	
		For $l = 1$ and $D = \curveop$, Equation~\eqref{eq:recursion} reads
		\begin{equation}
			\label{eq:1st_recursion}
			\dF \Sexp[1](f) = \frac{i}{2} \bigl[ b + \overline{b} , \curveop \bigr] \, .
		\end{equation}
		Identifying a function with its curve operator for notational convenience, and recalling the definition of $b$ and $\overline{b}$, one has
		\begin{equation*}
			b + \overline{b} = \Delta_{\tilde{G}} + 2 \nabla_{\tilde{G} \cdot \dif F} - 2 \lambda \dT F \, .
		\end{equation*}
		Notice that the first and last terms are both exact, with primitives $- \Delta$ and $-2 \lambda F$, respectively.
		However, the last term does not contribute to the commutator in~\eqref{eq:1st_recursion}, which becomes
		\begin{equation*}
			\frac{i}{2} \bigl[ b + \overline{b} , \curveop \bigr] = \Delta_{\tilde{G}} f + 2 \nabla_{\tilde{G} \cdot \dif f} + 2 \dif f \contr \tilde{G} \contr \dif F \, .
		\end{equation*}
		By our assumption $\dF \curveop = 0$, $f$ does not depend on $\sigma$, so the first term is clearly $\dT$-exact, with primitive $- \Delta f$; being central as a differential operator, this is also a primitive for $\dF$.
		The second term can be written as $- 2 \dT \nabla_{\tilde{g} \cdot \dif f}$, while on the other hand
		\begin{equation*}
			- 2 \bigl[ \dT F , \nabla_{\tilde{g} \cdot \dif f} \bigr] = 2 \dif f \contr \tilde{g} \contr \dif \dT F = 2 \dT \bigl( \dif f \contr \tilde{g} \contr \dif F \bigr) + 2 \dif f \contr \tilde{G} \contr \dif F \, .
		\end{equation*}
		This way one obtains the missing term, up to an exact correction.
		All in all, we have found that
		\begin{equation*}
			- \dF \bigl( 2 \nabla_{\tilde{g} \cdot \dif f} + 2 \dif f \contr \tilde{g} \contr \dif F + \Delta f \bigr) = \bigl[ \Delta_{\tilde{G}} + 2 \nabla_{\tilde{G} \cdot \dif F} - 2 \lambda \dT F , f \bigr] \, ,
		\end{equation*}
		thus proving Theorem~\ref{thm:curveop} for
		\begin{equation}
			\label{eq:1st_solution}
			\Sexp[1] (f) = - \frac{i}{2} \bigl( 2 \nabla_{\tilde{g} \cdot \dif f} + 2 \curveop[\dif f \cdot \tilde{g} \cdot \dif F + \Delta f] \bigr) \, .
		\end{equation}
	
	\subsection{\texorpdfstring{Recursion in $t$ and $\overline{t}$ and obstruction}{Recursion in t and bar t and obstruction}}
		\label{sec:t_obstruction}
		
		We include this section to show where the complications arise in the analogous formal approach in the full parameter $t$, in addition to the technical ones discussed in Section~\ref{sec:convergence}.
		
		We now consider $\EHWC$ as a formal connection, being manifestly a Laurent polynomial in $t$ and $\overline{t}$.
		We address the problem of finding a perturbation $\Triv (f)$ of $\curveop$ in such a way that $\EHWC \Triv(f) \equiv 0$ formally.
		Explicitly, for every vector $V$ on $\Teich$ the equation we are interested in reads
		\begin{equation*}
			\sum_{l = 0}^{\infty} V \left[ \triv (f) \right] + \sum_{l =0}^{\infty} \left[ \curveop[{V[F]}] , \triv (f) \right] + \sum_{l = 0}^{\infty} \left[ \rec{2t} b(V) - \rec{2 \overline{t}} \overline{b} (V) , \triv (f) \right]= 0.
		\end{equation*}
		Putting for convenience $\triv[-1] (f) = 0$, by separating this degree-by-degree one obtains for every $l \geq 0$ the recursive relation
		\begin{equation}
			\label{eq:formal_t_recursion}
			\dF \triv[l+1] (f) = - \Bigl[ \rec{2t} b - \rec{2 \overline{t}} \overline{b} , \triv (f) \Bigr] \, .
		\end{equation}
		Again, a necessary condition for the existence of a solution $\triv[l+1] (f)$ is for right-hand side to be $\dF$-closed.
		However, its differential is given by the following formula.
	
		\begin{Proposition}
			If the recursive relation is satisfied for $0 = \triv[-1] (f)$, $\curveop = \triv[0] (f), \dots, \triv[l] (f)$, then the differential of the right-hand side of~\eqref{eq:formal_t_recursion} is
			\begin{equation}
				\label{eq:obstruction}
				\dF \biggl[ \rec{2t} b - \rec{2\overline t} \overline b , \triv[l] (f) \biggr] = \rec{4 \abs t^2} \Bigl[ \bigl[ b \wedge \overline b \bigr] , \triv[l] (f) \Bigr] \, .
			\end{equation}
		\end{Proposition}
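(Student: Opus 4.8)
The plan is to imitate the computation behind Proposition~\ref{prop:no_Sobstruction}, the decisive difference being that the cancellation occurring there will now be only partial. Throughout I abbreviate $\beta = \rec{2t} b - \rec{2\overline{t}} \overline{b}$, so that $\EHWC D = \dF D + [\beta \wedge D]$ on operators and the recursion~\eqref{eq:formal_t_recursion} reads $\dF \triv[m+1](f) = -[\beta \wedge \triv[m](f)]$ for $0 \le m \le l$, with $\dF \triv[0](f) = \dF \curveop = 0$. The starting point is the flatness $\EHWC \circ \EHWC = 0$, which is genuine (not merely projective) on the endomorphism bundle, as noted after~\eqref{eq:exact_curvature}.

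Applying this flatness to the $0$-form $\triv[l](f)$ and using $\dF^2 = 0$, I would expand
\[
	\dF [\beta \wedge \triv[l](f)] = - [\beta \wedge \dF \triv[l](f)] - [\beta \wedge [\beta \wedge \triv[l](f)]] \, .
\]
The last term is the one that produces the asserted answer: by the graded Jacobi identity of Lemma~\ref{lemma:twisteddiff} it equals $\rec{2} [[\beta \wedge \beta] \wedge \triv[l](f)]$, and since $[\beta \wedge \beta] = \rec{4t^2}[b \wedge b] - \rec{2\abs{t}^2}[b \wedge \overline{b}] + \rec{4\overline{t}^2}[\overline{b} \wedge \overline{b}]$, discarding the central brackets $[b \wedge b]$ and $[\overline{b} \wedge \overline{b}]$ (Proposition 4.6 of~\cite{AG14}) leaves exactly $-[\beta \wedge [\beta \wedge \triv[l](f)]] = \rec{4\abs{t}^2}[[b \wedge \overline{b}], \triv[l](f)]$, the right-hand side of~\eqref{eq:obstruction}. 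As a consistency check, this coincides with $[\dF \beta \wedge \triv[l](f)]$ computed from the curvature~\eqref{eq:exact_curvature}.

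It then remains to dispose of the middle term $[\beta \wedge \dF \triv[l](f)]$. Substituting the recursion $\dF \triv[l](f) = -[\beta \wedge \triv[l-1](f)]$ turns it into $-[\beta \wedge [\beta \wedge \triv[l-1](f)]]$, so I must show that $C_m := [\beta \wedge [\beta \wedge \triv[m](f)]]$ vanishes at $m = l-1$. I would prove $C_m = 0$ for every $0 \le m \le l-1$ by a telescoping argument: applying the same flatness identity to $\triv[m](f)$, and noting that because step $m+1$ has been solved the form $[\beta \wedge \triv[m](f)] = -\dF \triv[m+1](f)$ is $\dF$-exact, the term $\dF[\beta \wedge \triv[m](f)]$ drops out; what remains reads $C_m = C_{m-1}$ for $m \ge 1$, while $C_0 = 0$ since $\dF \triv[0](f) = \dF \curveop = 0$. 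Hence $C_{l-1} = C_0 = 0$, the middle term disappears, and the identity follows.

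The heart of the matter — and the reason this section records an obstruction rather than its vanishing — is precisely that this telescoping cannot be pushed to $C_l$. Each equality $C_m = 0$ relies on $[\beta \wedge \triv[m](f)]$ being exact, i.e. on the existence of $\triv[m+1](f)$; at the top index the existence of $\triv[l+1](f)$ is exactly the question under investigation, so $C_l$ survives and equals $-\rec{4\abs{t}^2}[[b \wedge \overline{b}], \triv[l](f)]$. This is the structural contrast with Proposition~\ref{prop:no_Sobstruction}, where the corresponding contributions entered through the symmetric sum over $n$ and annihilated in pairs; here a single term is left, and the resulting class is genuinely nonzero whenever $[b \wedge \overline{b}]$ fails to be central.
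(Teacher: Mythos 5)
Your proof is correct and takes essentially the same route as the paper: expand the twisted differential of $\bigl[\tilde b,\triv[l](f)\bigr]$, identify the surviving term as $\rec{4\abs{t}^2}\bigl[[b\wedge\overline b],\triv[l](f)\bigr]$ via the flatness of $\EHWC$ together with the Jacobi identity and the centrality of $[b\wedge b]$ and $[\overline b\wedge\overline b]$, and kill the term involving $\dF\triv[l](f)$ using the solved steps. The only cosmetic differences are that you invoke $\EHWC\circ\EHWC=0$ at the outset rather than via the curvature identity at the end, and that you dispose of the middle term by telescoping $C_m=C_{m-1}$ down to $C_0=0$ where the paper runs an induction on the proposition itself — the two arguments are the same induction unrolled.
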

		
		\begin{proof}
			Let for convenience
			\begin{equation*}
				\tilde{b} = \rec{2t} b - \rec{2\overline t} \overline b \, .
			\end{equation*}
			Following the lines of the proof of Theorem~\ref{thm:no_Sobstruction}, we proceed by direct computation of the twisted differential and obtain
			\begin{equation}
				\label{eq:expldiff}
				\dF \Bigl[ \tilde{b} , \triv[l] (f) \Bigr] = \Bigl[ \bigr(\dF \tilde{b} \bigr) \wedge \triv[l] (f) \Bigr] - \Bigl[ \tilde{b} \wedge \dF \triv[l] (f) \Bigr] \, .
			\end{equation}
			Using the recursive relation, the second term can be written as
			\begin{equation*}
				- \Bigl[ \tilde{b} \wedge \dF \triv[l] (f) \Bigr] =
				\Bigl[ \tilde{b}  \wedge \bigl[ \tilde{b} \wedge \triv[l-1] (f) \bigr] \Bigr] \, .
			\end{equation*}
			Combining this with the Jacobi identity we find
			\begin{equation*}
				- \Bigl[ \tilde{b} \wedge \dF \triv[l] (f) \Bigr] = \rec{2} \Bigl[ \bigl[ \tilde{b} \wedge \tilde{b} \bigr] , \triv[l-1] (f) \Bigr] \, .
			\end{equation*}
			By expanding $\tilde{b}$ and using to the centrality of $[b \wedge b]$ and $[\overline{b} \wedge \overline{b}]$, one finds
			 \begin{equation*}
				- \Bigl[ \tilde{b} \wedge \dF \triv[l] (f) \Bigr] = - \rec{4 \abs{t}^2} \Bigl[ \bigl[ b \wedge \overline{b} \bigr] \wedge \triv[l-1] (f) \Bigr] \, .
			\end{equation*}
			By induction, for positive $l$ this represents the obstruction to the existence of $\triv[l] (f)$, so it vanishes, while for $l = 0$ one has simply $\triv[-1] (f) = 0$.
			This proves that the second term on the right-hand side of~\eqref{eq:expldiff} vanishes.
			As for the other term, we shall compute it by comparison with the explicit expression for the curvature of $\EHWC$ applied to $\triv (f)$.
			The flatness of $\nabla^{F}$ can be expressed as
			\begin{equation*}
				0 = \Bigl[ \dF \tilde{b} + \rec{2} \bigl[ \tilde{b} \wedge \tilde{b} \bigr] , \triv[l] (f) \Bigr] \, .
			\end{equation*}
			From this we conclude that
			\begin{equation*}
				\dF \bigl[ \tilde{b} , \triv[l] (f) \bigr] = - \rec{2} \Bigl[ \bigl[ \tilde{b} \wedge \tilde{b} \bigr] \wedge \triv[l] (f) \Bigr] \, ,
			\end{equation*}
			and the result follows by expanding $\tilde{b}$ again.
		\end{proof}
	
		The right-hand side of~\eqref{eq:obstruction} does not vanish in general.
		Indeed, for $V$ and $W$ vector fields on $\Teich$, using the symbols of $\bigl[ b \wedge \overline{b} \bigr] (V,W)$ computed in~\cite{AG14} (Proposition 4.9) one can see that
		\begin{equation*}
				\sigma_{1} \bigl[ [ b (V) , \overline{b} (W) ] , \curveop \bigr] = - 8 i k \dif f \contr \Theta(V,W) = 8 \pi k \dif f \contr \bigl( \dT G \bigr) (V,W) \, .
		\end{equation*}
		This shows that the recursion~\eqref{eq:formal_t_recursion} has in general no solution, even at the first step $l = 0$, and this approach fails even for curve operators.
	
\section{Solution of the recursion in genus 1}
	\label{sec:genus1}

	We now consider the situation where $\ModSp = \ModSp^{n,0}$ is the moduli space of flat $\SU(n)$-connections over a closed, oriented, smooth surface $\Sigma$ of genus $1$.
	
	\subsection{The Hitchin-Witten connection}
	
	Although some of the conditions of Section~\ref{sec:AG14} are not met in the situation at hand, the construction of the Hitchin-Witten connection can still be carried out.
	In $2\pi$-periodic coordinates on $\Sigma$, any gauge orbit of flat $\SU(n)$-connections or isotopy class of Riemann surface structures has a representative with constant coefficients.
	The tangent space of $\ModSp$ at any smooth point can by identified with that of $1$-forms on $\Sigma$ with constant coefficients in the Cartan sub-algebra of diagonal matrices in $\su(n)$.
	Under this correspondence, all the tensor fields introduced in Section~\ref{sec:AG14} are represented by matrices whose entries depend on $\sigma \in \Teich$ alone.
	In particular, the metric $g$ has constant entries in these coordinates, so the Levi-Civita connection is the trivial one and hence flat; since $\tilde{G} (V)$ has constant coefficients for any vector $V$ on $\Teich$, it is also parallel, so one may write
	\begin{equation*}
		\nabla \tilde{G} = 0 \, .
	\end{equation*}
	It can be checked that the family of complex structures is holomorphic, rigidity following from the equation above.
	Moreover, we have as Ricci potential $F = 0$ with $\lambda = 0$, so one has simply
	\begin{equation*}
		b = \Delta_{G} = \nabla^{2}_{G}
		\qquad \text{and} \qquad
		\overline{b} = \Delta_{\overline{G}} = \nabla^{2}_{\overline{G}} \, ,
	\end{equation*}
	and $\dF$ can be replaced with the usual differential $\dT$.
	The Hitchin-Witten connection can be defined in this case in the same way as in Section~\ref{sec:AG14}.
	
	\begin{Proposition}[Witten]
		\label{prop:trivialisation}
		The Hitchin-Witten connection for a surface of genus $1$ is gauge-equivalent to the trivial connection, and the equivalence is realised as
		\begin{equation*}
			\exp \bigl( r \Delta \bigr) \HWC \exp \bigl( - r \Delta \bigr) = \nablatr \, ,
		\end{equation*}
		where $r \in \Cx$ is such that
		\begin{equation}
			\label{eq:defr}
			e^{4rk} = - \frac{\overline{t}}{t} \, .
		\end{equation}
	\end{Proposition}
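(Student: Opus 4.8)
The plan is to verify the stated gauge equivalence by a direct computation, reducing it first to an identity of operator-valued $1$-forms on $\Teich$ and then to a pair of scalar equations. Since $F = 0$ in the genus-$1$ case, the connection is $\HWC = \nablatr + A$ with $A = \rec{2t} b - \rec{2\overline{t}} \overline{b}$, and $\dF$ reduces to $\dT$. Writing $g = \exp(r\Delta)$ for the (constant) scalar $r$ and differentiating with the trivial connection, conjugation gives
\begin{equation*}
	g\, \HWC\, g^{-1} = \nablatr + g A g^{-1} - (\dT g)\, g^{-1} ,
\end{equation*}
so that $\exp(r\Delta)\HWC\exp(-r\Delta) = \nablatr$ is equivalent to the identity $g A g^{-1} = (\dT g)\, g^{-1}$ of $\DiffD$-valued $1$-forms on $\Teich$.

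The heart of the argument is a pair of commutation relations. In the constant-coefficient coordinates described above, $\Delta$, $b = \nabla^{2}_{G}$ and $\overline{b} = \nabla^{2}_{\overline{G}}$ are all second-order operators with $\sigma$-dependent but $\ModSp$-constant coefficients, and the only failure of commutativity comes from the curvature of $\Lqnt^{k}$, which makes each $[\nabla_{i}, \nabla_{j}]$ a central operator proportional to $\omega_{ij}$. Decomposing $\tilde{G} = G + \overline{G}$ into its $(2,0)$- and $(0,2)$-parts and using the identity $\tilde{g} \contr \omega = - J$ (which follows from $g = \omega \contr J$), I would compute
\begin{equation*}
	[\Delta, b] = - 4k\, b , \qquad [\Delta, \overline{b}] = 4k\, \overline{b} .
\end{equation*}
This $\lalg{sl}_{2}$-like structure, in which $b$ and $\overline{b}$ are eigenvectors of $\ad_{\Delta}$, is the main computational step and the place where the curvature constant enters; it is also what dictates the exponent $4rk$ appearing in~\eqref{eq:defr}.

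With these relations the two sides of $g A g^{-1} = (\dT g)\, g^{-1}$ can be evaluated explicitly. Conjugation by $g$ rescales $b$ and $\overline{b}$ by scalars, $\exp(r\Delta)\, b\, \exp(-r\Delta) = e^{-4kr} b$ and $\exp(r\Delta)\, \overline{b}\, \exp(-r\Delta) = e^{4kr}\overline{b}$, so the left-hand side equals $\frac{e^{-4kr}}{2t} b - \frac{e^{4kr}}{2\overline{t}} \overline{b}$. For the right-hand side some care is needed: because $\dT \Delta = -(b + \overline{b})$ does not commute with $\Delta$, one cannot naively differentiate the exponential, and I would instead invoke Duhamel's formula
\begin{equation*}
	(\dT g)\, g^{-1} = \int_{0}^{1} r\, \exp(u r\, \ad_{\Delta}) (\dT \Delta)\, \dif u .
\end{equation*}
Substituting $\dT \Delta = -(b + \overline{b})$ and the eigenvalue relations turns the integrand into scalar multiples of $b$ and $\overline{b}$, and the elementary integration yields $(\dT g)\, g^{-1} = \frac{e^{-4kr} - 1}{4k}\, b - \frac{e^{4kr} - 1}{4k}\, \overline{b}$.

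Matching the coefficients of $b$ and of $\overline{b}$ separately reduces the whole statement to the two scalar equations $\frac{e^{-4kr}}{2t} = \frac{e^{-4kr} - 1}{4k}$ and $\frac{e^{4kr}}{2\overline{t}} = \frac{e^{4kr} - 1}{4k}$. Solving these gives $e^{4kr} = (t - 2k)/t$ from the first and $e^{4kr} = \overline{t}/(\overline{t} - 2k)$ from the second; their compatibility, and their collapse to the clean form~\eqref{eq:defr}, is precisely the identity $t + \overline{t} = 2k$, that is $\Re t = k$, which holds automatically since $t = k + is$. Both equations then read $e^{4rk} = - \overline{t}/t$, which is the asserted condition on $r$. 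The only genuine difficulty beyond bookkeeping lies in the commutator computation of the second paragraph, together with the non-commutativity that forces the Duhamel step; everything else is elementary algebra.
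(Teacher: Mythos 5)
Your proof is correct, and while it rests on the same computational core as the paper's, it organises the exponential calculus differently. Both arguments reduce the claim to the identity $\exp(-r\Delta)\,\dT\bigl[\exp(r\Delta)\bigr] = \rec{2t} b - \rec{2\overline{t}}\overline{b}$ (your $gAg^{-1} = (\dT g)g^{-1}$ is this relation conjugated by $g$), and both hinge on the same commutation relation: your eigenvalue relations $[\Delta,b] = -4k\,b$ and $[\Delta,\overline{b}] = 4k\,\overline{b}$ are exactly equivalent to the paper's $\bigl[b\pm\overline{b},\Delta\bigr] = 4k\bigl(b\mp\overline{b}\bigr)$, obtained in both cases from the flatness of $\ModSp$ and the parallelism of $\tilde{g}$ and $\tilde{G}$, with the curvature of $\Lqnt^{k}$ supplying the constant. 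The difference is what happens next: the paper differentiates the exponential series term-wise, proves by induction the binomial identity $[b\pm\overline{b},\Delta^{n}] = \sum_{l=1}^{n}\binom{n}{l}(4k)^{l}\Delta^{n-l}\bigl(b\pm(-1)^{l}\overline{b}\bigr)$, and resums a double series into $e^{\pm 4kr}-1$; you instead exploit the fact that $b$ and $\overline{b}$ are $\ad_{\Delta}$-eigenvectors and apply Duhamel's formula, which collapses the computation of $(\dT g)g^{-1}$ to two elementary scalar integrals. Your route buys brevity and, more substantively, necessity: by matching the coefficients of $b$ and $\overline{b}$ separately you show that the condition $e^{4rk} = -\overline{t}/t$ is forced rather than merely sufficient, with the compatibility of your two scalar equations exposed as precisely $t+\overline{t} = 2k$, whereas the paper only verifies that the given $r$ works. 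The paper's route buys a purely formal identity in $\DiffD[[r]]$ that it reuses later, both for Proposition~\ref{prop:Adiff} and for the formal expansion at the end of Section~\ref{sec:genus1}. Correspondingly, the one point you should make explicit is that Duhamel's formula, applied to the unbounded operator $\Delta$, is to be read at the same formal term-by-term level as the paper's series manipulation; this is unproblematic here, since the integrand $\exp(ur\ad_{\Delta})(\dT\Delta) = -\bigl(e^{-4kur}b + e^{4kur}\overline{b}\bigr)$ is a finite combination of $b$ and $\overline{b}$, so it is a matter of presentation rather than a gap.
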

	
	\begin{proof}
		In order to prove the statement it is enough to show that
		\begin{equation*}
			\exp(-r \Delta) \dT \bigl[ \exp (r \Delta) \bigr] = \rec{2t} b - \rec{2 \overline{t}} \overline{b} \, .
		\end{equation*}
		We will proceed by differentiating the exponential series of $r \Delta$ term-wise, using the fact that $\dT \Delta = - (b + \overline{b})$.
		To this end, we compute the commutator of $b \pm \overline{b}$ with the powers of $\Delta$: using the flatness of $\ModSp$ and the parallelism of $\tilde{g}$ and $\tilde{G}$, one obtains by direct computation that
		\begin{equation*}
			\bigl[ b \pm \overline{b} , \Delta \bigr] = 4 k \bigl( b \mp \overline{b} \bigr)
		\end{equation*}
		and, by induction,
		\begin{equation*}
			\bigl[ b \pm \overline{b} , \Delta^{n} \bigr] = \sum_{l = 1}^{n} \binom{n}{l} (4k)^{l} \Delta^{n-l} \Bigl( b \pm (-1)^{l} \overline{b} \Bigr) \, .
		\end{equation*}
		Using this we can compute
		\begin{equation*}
			\begin{split}
				\dT \bigl(\Delta^{n} \bigr) ={}& - \sum_{j = 1}^{n} \Delta^{n-j} \bigl( b + \overline{b} \bigr) \Delta^{j-1} = \\
					={}& - n \Delta^{n-1} \bigl( b + \overline{b} \bigr) - \sum_{j = 1}^{n} \sum_{l = 1}^{j-1} \binom{j-1}{l} (4k)^{l} \Delta^{n - l - 1} \bigl( b + (-1)^{l} \overline{b} \bigr) \, .
			\end{split}
		\end{equation*}
		One can now exchange the sums and use the identity
		\begin{equation*}
			\sum_{j = l+1}^{n} \binom{j-1}{l} = \binom{n}{l+1}
		\end{equation*}
		to find
		\begin{equation*}
			\dT \bigl(\Delta^{n} \bigr) = - \sum_{l = 0}^{n-1} \binom{n}{l+1} (4k)^{l} \Delta^{n - l - 1} \bigl( b + (-1)^{l} \overline{b} \bigr) \, .
		\end{equation*}
		We now apply this to the derivative of the exponential series
		\begin{equation*}
			\dT \exp \bigl( r \Delta \bigr) = - \sum_{n = 1}^{\infty} \sum_{l=0}^{n-1} \binom{n}{l+1} \frac{ (4k)^{l} r^{n}}{n!} \Delta^{n - l - 1} \bigl( b + (-1)^{l} \overline{b} \bigr) \, .
		\end{equation*}
		We now change $l$ with $n - l - 1$, switch the sums and further change $n$ with $n+l$ to find
		\begin{equation*}
			\begin{split}
				\dT \exp \bigl( r \Delta \bigr) ={}& - \sum_{l=0}^{\infty} \sum_{n=1}^{\infty} \rec{n!} \rec{l!} (4k)^{n-1} r^{n+l} \Delta^{l} \bigl( b + (-1)^{n-1} \overline{b} \bigr) = \\
					={}& - \rec{4k} \sum_{l=0}^{\infty} \frac{(r \Delta)^{l}}{l!} \Biggl( \biggl( \sum_{n=1}^{\infty} \frac{(4kr)^{n}}{n!} \biggr) b - \biggl( \sum_{n=1}^{\infty} \frac{(-4kr)^{n}}{n!} \biggr) \overline{b} \Biggr) \, .
			\end{split}
		\end{equation*}
		Recognising the sums next to $b$ and $\overline{b}$ as truncated exponential series, they converge to $e^{\pm 4kr} - 1$, and using the condition~\eqref{eq:defr} we finally find
		\begin{equation*}
			\dT \exp \bigl( r \Delta \bigr) = - \rec{4k} \exp \bigl( r \Delta \bigr) \biggl( \frac{2k}{t} b - \frac{2k}{\overline{t}} \overline{b} \biggr) = \exp \bigl( r \Delta \bigr) \biggl( \rec{2t} b - \rec{2 \overline{t}} \overline{b} \biggr) \, .
		\end{equation*}
		This concludes the proof.
	\end{proof}
	
	\subsection{The formal connection and the recursion}
	
	We stress that the proofs of Theorem~\ref{thm:formal_connection} and Lemma~\ref{lemma:asymptotic} use none of the specific hypotheses of Section~\ref{sec:AG14} directly, but they do use the projective flatness of $\HWC$.
	Since we have proven that, in the case at hand, the Hitchin-Witten connection is actually flat, those two results extend to this situation, thus giving a formal Hitchin-Witten connection.
	It makes then sense to look for a formal trivialisation as a solution to a recursive system of differential equations as in~\eqref{eq:recursion}.
	Because in this situation $b + \overline{b} = \Delta_{\tilde{G}} = - \dT \Delta$ is exact, a solution to the first step of the recursion can be found as
	\begin{equation*}
		\dT \Bigl[ - \frac{i}{2} \Delta , D \Bigr] = \frac{i}{2} \bigl[ b + \overline{b} , D \bigr] \, ,
	\end{equation*}
	which for curve operators agrees indeed with the solution $\Sexp[1] (f)$ found in~\eqref{eq:1st_solution}, since
	\begin{equation*}
		\Bigl[ - \frac{i}{2} \Delta , \curveop \Bigr] = - \frac{i}{2} \bigl( \curveop[\Delta f] + 2 \nabla_{\tilde{g} \cdot \dif f} \bigr) \, .
	\end{equation*}
	This hints that one may look for a solution of the next steps by iterating the commutator with this operator.
	Since $\Delta$ is defined directly from the Kähler metric, it follows that a solution constructed this way is automatically invariant under the action of $\Mod_{\Sigma}$.
	
	\begin{Definition}
		We let
		\begin{equation*}
			a := - \frac{i}{2} \Delta \, ,
		\end{equation*}
		and for every non-negative integer $l$ and $\sigma$-independent $D \in \DiffD$ we set
		\begin{equation*}
			\Pexp (D) := \frac{\ad_{a}^{l} (D)}{l!} \, .
		\end{equation*}
	\end{Definition}
	
	When $D = \curveop$ for some function $f$, we shall use the short-hand $\Pexp (f)$ in place of $\Pexp (\curveop)$.
	In this notation, one has that $\Sexp[1] (f) = \Pexp[1] (f)$.
	With a slight modification to the calculations in the proof of Proposition~\ref{prop:trivialisation}, one can verify that
	\begin{equation*}
		\dT \ad_{a}^l = \frac{1}{4k} \sum_{n=1}^{l} \binom{l}{n} (2ik)^n \ad_{b - (-1)^n \bar{b}} \ad_{a}^{l-n} \, .
	\end{equation*}
	After introducing the relevant factorials, one obtains the following statement.
	
	\begin{Proposition}
		\label{prop:Adiff}
		The operators $\Pexp (D)$ satisfy the relation
		\begin{equation*}
			\dF \Pexp (D) = \sum_{n=1}^{l} \frac{(2ik)^n}{4kn!} \left[ b - (-1)^n \bar{b} , \Pexp[l-n] (D) \right] \, .
		\end{equation*}
	\end{Proposition}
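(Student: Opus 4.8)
The plan is to reduce the statement to the formula for $\dT \ad_{a}^{l}$ recorded just above, treating that formula as available. In the genus-$1$ setting we have $\dF = \dT$, and $a = -\frac{i}{2}\Delta$ is the only $\sigma$-dependent ingredient in $\Pexp (D) = \ad_{a}^{l}(D)/l!$, since $D$ is assumed $\sigma$-independent. Hence the exterior derivative factors through the normalising constant,
\begin{equation*}
	\dF \Pexp (D) = \frac{1}{l!} \bigl( \dT \ad_{a}^{l} \bigr) (D) \, ,
\end{equation*}
so the whole proposition is a matter of unwinding the displayed expression for $\dT \ad_{a}^{l}$ and collecting factorials.

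Carrying out that bookkeeping, I would apply the formula for $\dT \ad_{a}^{l}$ to $D$ and rewrite the two inner pieces in terms of the $\Pexp$'s: namely $\ad_{a}^{l-n} (D) = (l-n)! \, \Pexp[l-n] (D)$ and $\ad_{b - (-1)^{n} \overline{b}} (\cdot) = [ b - (-1)^{n} \overline{b} , \cdot ]$. This turns the right-hand side into
\begin{equation*}
	\dF \Pexp (D) = \frac{1}{4k} \sum_{n=1}^{l} \frac{\binom{l}{n} (l-n)!}{l!} (2ik)^{n} \bigl[ b - (-1)^{n} \overline{b} , \Pexp[l-n] (D) \bigr] \, ,
\end{equation*}
and the numerical coefficient collapses via $\binom{l}{n}(l-n)!/l! = 1/n!$ to the claimed $\frac{(2ik)^{n}}{4k \, n!}$, finishing the argument.

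For completeness I would also indicate how the formula for $\dT \ad_{a}^{l}$ is itself obtained, since this is where the real content sits; the derivation mirrors the proof of Proposition~\ref{prop:trivialisation}. Rewriting the genus-$1$ relation $[ b \pm \overline{b} , \Delta ] = 4k ( b \mp \overline{b} )$ in terms of $a$ gives $\ad_{a} ( b \pm \overline{b} ) = 2ik ( b \mp \overline{b} )$, and iterating yields
\begin{equation*}
	\ad_{a}^{p} ( b + \overline{b} ) = (2ik)^{p} \bigl( b + (-1)^{p} \overline{b} \bigr) \, .
\end{equation*}
Since $\dT a = -\frac{i}{2} \dT \Delta = \frac{i}{2} ( b + \overline{b} )$, differentiating the $l$-fold composition by Leibniz produces $\dT \ad_{a}^{l} = \frac{i}{2} \sum_{j=1}^{l} \ad_{a}^{j-1} \ad_{b + \overline{b}} \ad_{a}^{l-j}$. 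I would then commute each left block past the middle factor using the superoperator identity $\ad_{a}^{m} \ad_{Y} = \sum_{p=0}^{m} \binom{m}{p} \ad_{\ad_{a}^{p} Y} \ad_{a}^{m-p}$, insert the iterated commutator above, and resum over $p$ by the hockey-stick identity $\sum_{j = p+1}^{l} \binom{j-1}{p} = \binom{l}{p+1}$ — the same combinatorial step already used for Proposition~\ref{prop:trivialisation}. After the index shift $n = p+1$ and the simplification $\frac{i}{2} (2ik)^{n-1} = (2ik)^{n}/(4k)$ this lands exactly on the stated formula.

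I expect the main obstacle to lie entirely in this last paragraph: keeping straight which powers of $\ad_{a}$ sit to the left and to the right of the inserted commutator, and checking that the binomial factors generated by the super-Leibniz expansion telescope correctly under the hockey-stick identity. By contrast, once $\dT \ad_{a}^{l}$ is in hand, the passage to the $\Pexp$-normalisation is purely a factorial computation and presents no difficulty.
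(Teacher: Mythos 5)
Your proposal is correct and follows essentially the same route as the paper: the paper likewise deduces the proposition from the displayed formula for $\dT \ad_{a}^{l}$ --- which it justifies only as a ``slight modification'' of the calculations proving Proposition~\ref{prop:trivialisation} --- followed by exactly the factorial bookkeeping $\binom{l}{n}(l-n)!/l! = 1/n!$ that you perform. Your explicit derivation of that formula, via $\dT a = \frac{i}{2}(b + \overline{b})$, the super-Leibniz identity $\ad_{a}^{m}\ad_{Y} = \sum_{p=0}^{m}\binom{m}{p}\ad_{\ad_{a}^{p}Y}\ad_{a}^{m-p}$, the iterated relation $\ad_{a}^{p}(b+\overline{b}) = (2ik)^{p}\bigl(b+(-1)^{p}\overline{b}\bigr)$, and the same hockey-stick identity used in Proposition~\ref{prop:trivialisation}, is a valid filling-in of the modification the paper leaves to the reader, with all signs and coefficients checking out.
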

	
	This relation is identical to the recursion~\eqref{eq:recursion}, except for the coefficients.
	For this reason, it is natural to look for $\Sexp$ as a linear combination of the $\Pexp[n]$'s
	\begin{equation}
		\label{eq:ansatz}
		\Sexp = \sum_{r = 0}^{l} \Coeff{r}{l} \Pexp[l-r] \, ,
	\end{equation}
	where we have simplified the notation by leaving $D$ unspecified in $\Sexp$ and $\Pexp$.
	Here the coefficients $\Coeff{r}{l}$ are constant complex parameters, and the condition $\Sexp[0] = f$ is equivalent to $\Coeff{0}{0} = 1$.
	After substituting~\eqref{eq:ansatz} and manipulating the sums, the recursion~\eqref{eq:recursion} reads
	\begin{equation*}
		\begin{split}
				&\sum_{n=1}^{l} \sum_{r=0}^{l-n} \frac{(ik)^n}{2k} \Coeff{r}{l-n} \Bigl[ b - (-1)^n \bar{b} , \Pexp[l-n-r] \Bigr] = \\
			={}&\sum_{r=0}^{l-1} \sum_{n = 1}^{l-r} \frac{(ik)^n}{2k} \Coeff{r}{l-n} \Bigl[ b - (-1)^n \bar{b} , \Pexp[l-n-r] \Bigr] = \\
			={}& \sum_{r=0}^{l} \sum_{m=r+1}^{l} \frac{(ik)^{m-r}}{2k} \Coeff{r}{l-m+r} \Bigl[ b - (-1)^{m-r} \bar{b} , \Pexp[l-m] \Bigr] = \\
			={}& \sum_{m=1}^{l} \biggl( \sum_{r=0}^{m-1} \frac{(ik)^{m-r}}{2k} \Coeff{r}{l-m+r} \biggr) \Bigl[ b - (-1)^{m-r} \bar{b} , \Pexp[l-m] \Bigr] \, .
		\end{split}
	\end{equation*}
	Applying the same substitution and Proposition~\ref{prop:Adiff} to the left-hand side yields
	\begin{equation*}
		\begin{split}
			\dT \Sexp ={}& \sum_{r=0}^{l} \Coeff{r}{l} \sum_{n=1}^{l-r} \frac{(2ik)^n}{4kn!} \Bigl[ b - (-1)^n \bar{b} , \Pexp[l-r-n] \Bigr] = \\
			={}& \sum_{r=0}^{l} \sum_{m=r+1}^{l} \frac{(2ik)^{m-r}}{4k(m-r)!} \Coeff{r}{l} \Bigl[ b - (-1)^{m-r} \bar{b} , \Pexp[l-m] \Bigr] = \\
			={}& \sum_{m=1}^{l} \biggl( \sum_{r=0}^{m-1} \frac{(2ik)^{m-r}}{4k(m-r)!} \Coeff{r}{l} \biggr) \Bigl[ b - (-1)^{m-r} \bar{b} , \Pexp[l-m] \Bigr] \, .
		\end{split}
	\end{equation*}
	After separating holomorphic and anti-holomorphic parts, the recursion translates then into the system
	\begin{equation*}
		\begin{gathered}
			\sum_{m=1}^{l} \biggl( \sum_{r=0}^{m-1} \frac{(ik)^{m-r}}{2k} \Coeff{r}{l-m+r} - \sum_{r=0}^{m-1} \frac{(2ik)^{m-r}}{4k(m-r)!} \Coeff{r}{l} \biggr) \Bigl[ b , \Pexp[l-m] \Bigr] = 0 \, , \\
			\sum_{m=1}^{l} \biggl( \sum_{r=0}^{m-1} \frac{(-ik)^{m-r}}{2k} \Coeff{r}{l-m+r} - \sum_{r=0}^{m-1} \frac{(-2ik)^{m-r}}{4k(m-r)!} \Coeff{r}{l} \biggr) \Bigl[ \bar{b} , \Pexp[l-m] \Bigr] = 0 \, .
		\end{gathered}
	\end{equation*}
	One may now observe by induction that, in the case of a curve operator $\curveop$, the top symbol of $\Pexp (f)$ essentially consists of the $l$-th derivatives of $f$.
	Generically, the $\Pexp (f)$'s form a family of differential operators of increasing order, hence linearly independent.
	Therefore, the above equations hold for every differential operator $D$ if and only if each of the summands in $m$ vanishes.
	To summarise, we have established the following fact.
	
	\begin{Proposition}
		Assuming that each $\Sexp$ is of the form~\eqref{eq:ansatz}, where $\Coeff{r}{l}$ is independent of $D$, the recursion~\eqref{eq:recursion} is equivalent to the system of equations
		\begin{equation*}
			E_{m,l}^{\pm} = 0
		\end{equation*}
		for all pairs of positive integers $m \leq l$, where
		\begin{equation*}
			E_{m,l}^{\pm} = \sum_{r=0}^{m-1} \left( \frac{(\pm 2ik)^{m-r}}{2(m-r)!} \Coeff{r}{l} - (\pm ik)^{m-r} \Coeff{r}{l-m+r} \right) \, .
		\end{equation*}
	\end{Proposition}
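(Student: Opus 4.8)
The plan is to rewrite both sides of the recursion~\eqref{eq:recursion} for $\Sexp[l]$ as a single sum over a ``step'' index $m$ of the basic operators $\bigl[ b - (-1)^{m-r} \overline{b} , \Pexp[l-m] \bigr]$, and then to match the scalar coefficients of these operators on the two sides. Since the ansatz~\eqref{eq:ansatz} expresses $\Sexp[l]$ through the $\Pexp[n]$'s with $D$-independent coefficients $\Coeff{r}{l}$, the whole problem becomes a purely numerical comparison once this common form is reached.

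First I would substitute~\eqref{eq:ansatz} into the right-hand side of~\eqref{eq:recursion}. This produces a double sum over the power $n$ of $ik$ coming from the recursion and the ansatz index $r$; setting $m = n + r$ and exchanging the order of summation gathers the terms into $\sum_{m=1}^{l}\bigl(\sum_{r=0}^{m-1}\tfrac{(ik)^{m-r}}{2k}\Coeff{r}{l-m+r}\bigr)\bigl[b-(-1)^{m-r}\overline{b},\Pexp[l-m]\bigr]$. Next I would treat the left-hand side $\dT\Sexp[l]=\sum_{r}\Coeff{r}{l}\,\dT\Pexp[l-r]$ by inserting the formula of Proposition~\ref{prop:Adiff} for $\dT\Pexp$; the same substitution $m=n+r$ puts it into the matching shape, now with coefficients $\sum_{r=0}^{m-1}\tfrac{(2ik)^{m-r}}{4k(m-r)!}\Coeff{r}{l}$. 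Both manipulations are the routine index juggling already displayed above.

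With both sides written against the common operators $\bigl[b-(-1)^{m-r}\overline{b},\Pexp[l-m]\bigr]$, I would split each commutator into its $b$-part and its $\overline{b}$-part, the sign $(-1)^{m-r}$ (which turns $ik$ into $-ik$ in the antiholomorphic family) separating the two. After moving all terms to one side and clearing the common factor $2k$, the coefficient of $\bigl[b,\Pexp[l-m]\bigr]$ is exactly $E^{+}_{m,l}$ and that of $\bigl[\overline{b},\Pexp[l-m]\bigr]$ is $E^{-}_{m,l}$, so the recursion~\eqref{eq:recursion} is equivalent to the vanishing of $\sum_{m}E^{+}_{m,l}\bigl[b,\Pexp[l-m]\bigr]+\sum_{m}E^{-}_{m,l}\bigl[\overline{b},\Pexp[l-m]\bigr]$ for the relevant class of operators $D$.

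The hard part, and the only step that is more than bookkeeping, will be passing between this single vanishing combination and the termwise conditions $E^{\pm}_{m,l}=0$. One direction is immediate: if every $E^{\pm}_{m,l}$ vanishes then the two sides agree as operator identities for every $D$, so the recursion holds. For the converse I would fix $D=\curveop$ a curve operator and argue that the operators $\Pexp[l-m]$ are linearly independent. This I expect to establish by induction on $l$, showing that the top symbol of $\Pexp(f)=\ad_a^{l}(f)/l!$ is, up to a nonzero constant, assembled from the $l$-th derivatives of $f$; hence $\Pexp[0](f),\dots,\Pexp[l](f)$ have strictly increasing order and are generically independent. Because $\bigl[b,\cdot\bigr]$ and $\bigl[\overline{b},\cdot\bigr]$ involve only the holomorphic, respectively antiholomorphic, derivatives, the two families decouple and the coefficients must vanish separately, forcing $E^{+}_{m,l}=0$ and $E^{-}_{m,l}=0$. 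Ranging over all $m\le l$ then yields the stated equivalence.
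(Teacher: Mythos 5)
Your proposal is correct and follows essentially the same route as the paper: substitute the ansatz~\eqref{eq:ansatz} into both sides of~\eqref{eq:recursion}, reindex via $m = n + r$ to express everything against the operators $\bigl[ b - (-1)^{m-r}\overline{b}, \Pexp[l-m] \bigr]$, separate the $b$- and $\overline{b}$-parts, and invoke the generic linear independence of the $\Pexp(f)$'s (via the top-symbol/increasing-order argument for curve operators) to force each coefficient $E^{\pm}_{m,l}$ to vanish. The paper's own proof is exactly this computation, including the same induction-on-$l$ observation about the top symbol of $\Pexp(f)$ consisting essentially of the $l$-th derivatives of $f$.
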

	
	We now study the equivalent system
	\begin{equation}
		\label{eq:numrec}
			E^{\pm}_{m,l} \mp ik E^{\pm}_{m-1 , l-1} = 0		\qquad	1 \leq m \leq l \, ,
	\end{equation}
	where it is understood that $E^{\pm}_{0,l} = 0$ for all $l$.
	This way, all the terms with $\Coeff{r}{l-m+r}$ disappear, except for the one corresponding to $r = m-1$.
	It is now convenient to replace $r$ by  $\rho - 1$ and collect the coefficients with fixed $l$ into the vectors
	\begin{equation*}
		\Xoeff{}{l} = \bigl( \Coeff{\rho-1}{l} \bigr)_{1 \leq \rho \leq l+1} \, ,
			\qquad
		\Troeff{}{l} = \bigl(\Coeff{\rho-1}{l} \bigr)_{1 \leq \rho \leq l} \, .
	\end{equation*}
	In this notation, the equation reads
	\begin{equation*}
		\sum_{\rho = 1}^{m} \frac{(\pm 2ik)^{m-\rho+1}}{2 (m-\rho+1)!} \Xoeff{\rho}{l} = \pm ik \sum_{\rho = 1}^{m-1} \frac{(\pm 2ik)^{m-\rho}}{2 (m-\rho)!} \Xoeff{\rho}{l-1} \pm ik \Xoeff{m}{l-1} \, .
	\end{equation*}
	Notice that, as $\rho$ ranges between $1$ and $l$, the coefficients $\Xoeff{\rho}{l}$ are the entries of the vector $\Troeff{}{l}$, while the $\Xoeff{\rho}{l-1}$'s are those of $\Xoeff{}{l}$.
	The equation may then be seen as a linear relation
	\begin{equation*}
		L^{(l)}_{\pm} \Troeff{}{l} = R^{(l)}_{\pm} \Xoeff{}{l-1} \, ,
	\end{equation*}
	where $L^{(l)}_{\pm}$ and $R^{(l)}_{\pm}$ are square matrices of size $l$.
	Notice that the entry in position $(m,\rho)$ in each of these vanishes whenever $\rho > m$, which makes them lower-triangular.
	Moreover, each entry depends only on the difference $m - \rho$, which means that they are polynomials in the standard nilpotent matrix
	\begin{equation*}
		N
		= \begin{pmatrix}
			0 	&				&				&				&\\
			1	&	0			&				&				&	\\
				&	\ddots	&	\ddots	&				&	\\
				&				&	1			&	0			&	\\
				&				&				&	1			&	0
		\end{pmatrix} \, .
	\end{equation*}
	More precisely, one may write
	\begin{equation*}
		L^{(l)}_{\pm} = \sum_{n=0}^\infty \frac{(\pm 2i k)^{n + 1}}{2 (n + 1)!} N^n \, ,
			\qquad
		R^{(l)}_{\pm} = \pm ik \1 \pm i k \sum_{n=1}^\infty \frac{(\pm 2i k)^{n}}{2 n!} N^n \, .
	\end{equation*}
	Both matrices are invertible, being triangular with no zeroes on the diagonal; in particular, the inverse of $L^{(l)}_{\pm}$ determines $\Troeff{}{l}$ in terms of $\Xoeff{}{l-1}$.
	
	As is easily seen, the sums expressing the matrices are in fact the Taylor series of the analytic functions
	\begin{equation*}
		\begin{gathered}
			\sum_{n=0}^\infty \frac{(\pm 2i k)^{n + 1}}{2 (n + 1)!} z^n = \frac{e^{\pm 2ik z} - 1}{2 z} \, , \\[.4em]
			\pm ik \biggl( 1 + \sum_{n=1}^\infty \frac{(\pm 2i k)^{n}}{2 n!} z^n \biggr) = \pm ik \frac{e^{2ikz}+1}{2} \, .
		\end{gathered}
	\end{equation*}
	Since the Taylor series of a product is the formal product of the Taylor series of the factors, one may turn the problem from matrices to holomorphic functions.
	Indeed, if $\varphi$ is a holomorphic function in a neighbourhood of $0 \in \Cx$, it makes sense to write $\varphi(N)$ to mean the evaluation at $N$ of the Taylor series of $\varphi$ at $0$, because the sum terminates.
	Therefore, one may finally conclude that the system of equations~\eqref{eq:numrec} for even $m$ is equivalent to
	\begin{equation*}
		\Troeff{}{l} = \pm ik \frac{ e^{\pm 2 i k N} + 1}{e^{\pm 2 i k N} -1} N \Xoeff{}{l-1} = \pm ik \tanh (\pm ikN) N \Xoeff{}{l-1} \, .
	\end{equation*}
	Since the hyperbolic tangent is an odd function, the signs may finally be disregarded: the two sets of equations corresponding to the signs are in fact equivalent.
	
	This essentially concludes the discussion of the solutions of the numeric recursion, which is summarised as the following reformulation of the first part of Theorem~\ref{thm:introtrivialisation}.
	
	\begin{Theorem}
		There exists a unique solution of the numeric recursion for every choice of the coefficients $\Coeff{l}{l}$ for $l \geq 1$. 
	\end{Theorem}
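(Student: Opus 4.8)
The plan is to prove the statement by induction on $l$, using as the engine of the recursion the linear relation $L^{(l)}_{\pm} \Troeff{}{l} = R^{(l)}_{\pm} \Xoeff{}{l-1}$ derived above, together with the invertibility of $L^{(l)}_{\pm}$. The essential point is a bookkeeping observation about which coefficients each vector encodes: the truncated vector $\Troeff{}{l}$ collects the level-$l$ coefficients $\Coeff{0}{l}, \dots, \Coeff{l-1}{l}$, that is, all of them \emph{except} the diagonal entry $\Coeff{l}{l}$, whereas the right-hand side involves the complete previous-level vector $\Xoeff{}{l-1} = (\Coeff{0}{l-1}, \dots, \Coeff{l-1}{l-1})$. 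Since $L^{(l)}_{\pm}$ is lower-triangular with nonzero diagonal, hence invertible, the relation determines $\Troeff{}{l}$ uniquely from $\Xoeff{}{l-1}$; appending the freely prescribed value $\Coeff{l}{l}$ then assembles the full vector $\Xoeff{}{l}$.

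Concretely, I would organise the induction as follows. The base case is $l = 0$, where the normalisation $\Coeff{0}{0} = 1$ fixes $\Xoeff{}{0} = (1)$ and there is nothing to solve. Assuming $\Xoeff{}{l-1}$ has been determined, I apply $(L^{(l)}_{\pm})^{-1} R^{(l)}_{\pm}$ to obtain $\Troeff{}{l}$, which supplies $\Coeff{0}{l}, \dots, \Coeff{l-1}{l}$; together with the chosen $\Coeff{l}{l}$ this yields $\Xoeff{}{l}$ and closes the induction. Existence and uniqueness are then both immediate from the invertibility of $L^{(l)}_{\pm}$ at each step, so the solution is entirely pinned down by the sequence of free parameters $\Coeff{l}{l}$ with $l \geq 1$.

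The one point that genuinely requires care --- and the main obstacle --- is that there are in fact two systems, labelled by the signs and corresponding to the holomorphic and anti-holomorphic constraints, so one must check that they do not over-determine the coefficients. Here I would invoke the explicit form already computed, namely
\begin{equation*}
	\Troeff{}{l} = \pm ik \tanh (\pm ik N) N \, \Xoeff{}{l-1} \, .
\end{equation*}
Because $\tanh$ is an odd function, the operator $\pm ik \tanh(\pm ik N)$ is the same for both choices of sign, so the two systems impose identical constraints on $\Troeff{}{l}$. Consequently the inductive step is consistent, no contradiction can arise between the two halves, and the unique vector produced above genuinely satisfies the full numeric recursion for any prescribed values of the diagonal coefficients $\Coeff{l}{l}$.
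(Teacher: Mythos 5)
Your proof is correct and follows essentially the same route as the paper: both reduce the problem to the triangular linear relation $L^{(l)}_{\pm} \Troeff{}{l} = R^{(l)}_{\pm} \Xoeff{}{l-1}$, use the invertibility of the lower-triangular matrix $L^{(l)}_{\pm}$ to determine $\Troeff{}{l}$ from $\Xoeff{}{l-1}$ with $\Coeff{l}{l}$ left free, and dispose of the apparent over-determination from the two signs by the oddness of the hyperbolic tangent in $\Troeff{}{l} = \pm ik \tanh(\pm ik N) N \Xoeff{}{l-1}$. Your explicit inductive bookkeeping merely spells out what the paper states more tersely, so there is nothing to add.
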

	
	\begin{proof}
		We have proved that the system is equivalent to the vanishing of $E^{\pm}_{m,l} \mp ik E^{\pm}_{m-1,l-1} = 0$ for every positive odd $m \leq l$.
		In turn, this set of equations is equivalent to a triangular one expressing $\Troeff{}{l}$ in terms of $\Xoeff{}{l-1}$.
		This leaves as free variables precisely the last entry of each $\Xoeff{}{l}$, which corresponds to $\Coeff{l}{l}$ as claimed.
	\end{proof}
	
	In spite of this ambiguity, it should be noted that the solution is essentially unique.
	Indeed, consider the particular trivialisation $\mathcal{R}_{0}$ corresponding to $\Coeff{l}{l} = 0$ for every $l > 0$, and the $\mathcal{R}_{l}$ with $\Coeff{l}{l} = 1$ for one specific value of $l > 0$.
	By linearity, the difference $\mathcal{R}_{0} - \mathcal{R}_{l}$ is a solution of the recursion starting with $\Coeff{0}{0} = 0$ instead of $1$, and is therefore determined by the same expressions.
	It is then immediate from the explicit form of the solutions that this difference equals $s^{-l} \mathcal{R}_{0}$, and by linearity one obtains the following statement.
	
	\begin{Proposition}
		\label{prop:formal_solution_uniqueness}
		The solution $\mathcal{R}_{\alpha}$ corresponding to a sequence of coefficients $\Coeff{l}{l}$ is related to $\mathcal{R}_{0}$ as above by the relation
		\begin{equation*}
			\mathcal{R}_{\alpha} = \biggl( \sum_{l = 0}^{\infty} \Coeff{l}{l} s^{-l} \biggr) \cdot \mathcal{R}_{0} \, .
		\end{equation*}
	\end{Proposition}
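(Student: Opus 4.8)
The goal is to show that every trivialisation $\mathcal{R}_{\alpha}$ differs from the reference trivialisation $\mathcal{R}_{0}$ only by multiplication by the scalar power series $\sum_{l} \Coeff{l}{l} s^{-l}$. The plan is to exploit two structural features of the recursion~\eqref{eq:recursion}: its linearity, and its invariance under multiplication by $\inv{s}$. First I would note that~\eqref{eq:recursion} is linear in the unknowns $\Sexp$ and that the equation at level $l$ couples only to strictly lower levels; consequently the assignment $\Phi$ sending a sequence of free coefficients $\bigl( \Coeff{l}{l} \bigr)_{l \geq 0}$ to the corresponding solution is linear. Allowing $\Coeff{0}{0}$ to vary as well, so that it records the initial datum $\Sexp[0] (D) = \Coeff{0}{0} D$, the uniqueness result just established shows that $\Phi$ is a bijection onto the space of all formal solutions, each solution being pinned down by its sequence of top coefficients.

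The key step, which I expect to carry the weight of the argument, is the observation that multiplication by $\inv{s}$ maps solutions to solutions. If $\mathcal{R}$ satisfies~\eqref{eq:recursion}, then the level-$m$ coefficient of $\inv{s} \mathcal{R}$ is the level-$(m-1)$ coefficient of $\mathcal{R}$; substituting this into~\eqref{eq:recursion} at level $m$ and noting that the term $n = m$ drops out, since it would involve $\Sexp[-1] (D) = 0$, reproduces exactly the level-$(m-1)$ equation for $\mathcal{R}$. Iterating, $s^{-j} \mathcal{R}_{0}$ solves the recursion for every $j \geq 0$. The computation is short, but it is the precise mechanism linking the formal parameter $\inv{s}$ to the shift in levels, and everything else follows from it.

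It then remains to identify free coefficients. Writing $\mathcal{R}_{0}^{(m)} = \sum_{r=0}^{m} \Coeff{r}{m} \Pexp[m-r]$ as in~\eqref{eq:ansatz}, the coefficient of $\Pexp[0]$ in the level-$m$ term of $s^{-j} \mathcal{R}_{0} = \sum_{m} \mathcal{R}_{0}^{(m-j)} s^{-m}$ equals $\Coeff{m-j}{m-j}$, which for $\mathcal{R}_{0}$ (whose top coefficients vanish beyond level $0$) is $\delta_{m,j}$. Hence $s^{-j} \mathcal{R}_{0}$ is the solution whose only nonzero free coefficient is the $j$-th one, equal to $1$; that is, $s^{-j} \mathcal{R}_{0} = \Phi(e_{j})$ for the $j$-th basis sequence $e_{j}$. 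Since $\mathcal{R}_{\alpha} = \Phi\bigl( ( \Coeff{l}{l} )_{l} \bigr)$, linearity yields $\mathcal{R}_{\alpha} = \sum_{j} \Coeff{j}{j} s^{-j} \mathcal{R}_{0} = \bigl( \sum_{j} \Coeff{j}{j} s^{-j} \bigr) \mathcal{R}_{0}$, which is the assertion. The only points requiring care are the bookkeeping of which entry is free at each level and the remark that multiplication by the scalar series $\sum_{j} \Coeff{j}{j} s^{-j}$ commutes with $\FHWC$, the latter being immediate since that series has central coefficients.
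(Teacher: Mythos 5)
Your proof is correct, and its skeleton---linearity of the recursion in the unknowns together with uniqueness of the solution given the free coefficients $\Coeff{l}{l}$---is the same as the paper's. Where you genuinely differ is in how the shift identity is justified: the paper considers the differences $\mathcal{R}_{0} - \mathcal{R}_{l}$ and reads off their identification with $s^{-l}\mathcal{R}_{0}$ ``from the explicit form of the solutions,'' i.e.\ from the fact that the triangular numeric system $\Troeff{}{l} = \pm ik \tanh(\pm ik N)\, N \Xoeff{}{l-1}$ has the same shape at every level and is therefore invariant under shifting the free data; you instead prove directly from~\eqref{eq:recursion} that multiplication by $\inv{s}$ sends solutions to solutions, the $n = m$ term dropping out against $\Sexp[-1](D) = 0$. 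Your version of this step is more self-contained, since it never invokes the explicit solved form of the numeric recursion, only the uniqueness statement. One inaccuracy to fix: your claim that $\Phi$ is a bijection onto ``the space of all formal solutions'' is too strong. At each level a solution of~\eqref{eq:recursion} is determined only up to addition of a $\dF$-closed operator (e.g.\ adding $s^{-1}\mathcal{R}_{0}(D')$ for another $\sigma$-independent $D'$ produces a solution outside the family), so uniqueness given the sequence $\Coeff{l}{l}$ holds only within the ansatz class~\eqref{eq:ansatz}, equivalently for solutions of the numeric recursion, which is where the paper's genericity assumption on the linear independence of the $\Pexp[l-m]$'s enters. Since $\mathcal{R}_{\alpha}$, $\mathcal{R}_{0}$ and $s^{-j}\mathcal{R}_{0}$ all lie in that class---as your re-indexing $\tilde{\alpha}_{r'}^{(m)} = \Coeff{r'-j}{m-j}$ shows for the shifted series---the argument goes through unchanged; note also that the final infinite combination $\sum_{j} \Coeff{j}{j} s^{-j} \mathcal{R}_{0}$ is unproblematic because only the terms with $j \leq m$ contribute in degree $m$.
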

	
	In other words, the various solutions differ by a factor of an invertible power series in $\inv{s}$.
	
	\subsection{Example: a solution from the trivialisation of \texorpdfstring{$\tilde{\Nabla}$}{HWC}}
	
	We conclude this work by showing one particular solution of the numeric recursion above, obtained from the trivialisation of $\HWC$ of Proposition~\ref{prop:trivialisation}.
	In fact, the key point of that result is the relation
	\begin{equation}
		\label{eq:formalHWCtriv}
		\dT \sum_{n = 0}^{\infty} \frac{(r \Delta)^{n}}{n!} = \sum_{n = 0}^{\infty} \frac{(r \Delta)^{n}}{n!} \Bigl( \rec{2t} b - \rec{2 \overline{t}} \overline{b} \Bigr) \, ,
	\end{equation}
	which holds formally in $\DiffD[[r]]$ by regarding $1/t$ and $1/\overline{t}$ as their corresponding power series.
	On the other hand, $r$ admits a Taylor series in $\inv{s}$ for $s \to \infty$, for which moreover the term of degree $0$ vanishes.
	Indeed, $\overline{t}/t$ is a unit complex number which only takes the value $-1$ when $s = 0$, while in the limit for $s \to \infty$ with $k$ fixed it goes to $1$.
	Therefore, one may use the branch of the logarithm on $\Cx \setminus \R_{\leq 0}$ with $\log(1) = 0$ to express $r$ as
	\begin{equation*}
		r = \rec{4k} \log \Bigl( - \frac{k - is}{k + is} \Bigr) \, .
	\end{equation*}
	This can indeed be expanded as a formal power series in $\inv{s}$, and the vanishing of the zero-order term follows from that, for $s \to \infty$, one has $r \to 0$.
	Explicitly, this series reads
	\begin{equation*}
		r = \rec{2k} \sum_{n = 0}^{\infty} \frac{(ik)^{2n+1}}{2n+1} s^{-2n-1} \, .
	\end{equation*}
	In particular, it makes sense to substitute $r$ in~\eqref{eq:formalHWCtriv} with this series, thus obtaining a relation in $\DiffA$.
	This allows one to regard $\exp(r \Delta)$ as an element of this algebra, and shows that for a $\sigma$-independent operator $D \in \DiffD$ one finally has
	\begin{equation*}
		\FHWC \Bigl( \exp(r \Delta) D \exp(-r \Delta) \Bigr) = 0 \, .
	\end{equation*}
	The final point to notice is that this parallel series is in fact of the form of~\eqref{eq:ansatz}, as can be seen by using the formal analogue of the Baker-Campbell-Hausdorff formula, which reads
	\begin{equation*}
		\begin{split}
			\Biggl( \sum_{n=0}^{\infty} \frac{ (r \Delta)^{n}}{n!} \Biggr) D \Biggl( \sum_{n=0}^{\infty} \frac{ (-r\Delta)^{n}}{n!} \Biggr) ={}& \sum_{n=0}^{\infty} \frac{ (- r \ad_{\Delta})^{n}(D)}{n!} \, \\
			={} & \sum_{n = 0}^{\infty} {\Biggl( \sum_{m = 0}^{\infty} \frac{(ik)^{2m}}{2 m + 1} s^{-2m-1} \Biggr)\!}^{n} \Pexp[n] (D) \, .
		\end{split}
	\end{equation*}
	Moreover, $\Coeff{r}{l}$ occurs in this expression as the coefficient of $\Pexp[l-r] (D) s^{-l}$, and it appears when expanding the summand with $n = l-r$.
	Applying this in particular to $r = l$ shows that this is the solution corresponding to $\Coeff{l}{l} = 0$ for every $l > 0$.
	This finally concludes the proof of Theorem~\ref{thm:introtrivialisation}.
		
\appendix

\section{Miscellanea on differential operators}
\subsection*{Symbols and useful algebraic relations}
\label{sec:symbols}

We shall present here the notion of the symbols of finite-order differential operators acting on sections of $\Lqnt^{k}$ over $\ModSp$.
Although we do not use them extensively for the computations in this work, they do play a role in making sense of the asymptotic convergence of our expansions for $t \to \infty$.
As this construction depends on the Riemannian metric on $\ModSp$, and hence on $\sigma \in \Teich$, we assume this parameter to be fixed throughout the construction.

Given $n \in \Z_{\geq 0}$ and a differential operator $D$ of finite order at most $n$, the $n$-th symbol $\sigma_n (D)$ of $D$ is defined \cite{Wel08} as a complex, totally symmetric, $n$-contra-variant tensor field.
It can be described in coordinates as the collection of the coefficients of the derivatives of order $n$ appearing in $D$, and it vanishes if and only if the order of $D$ is strictly smaller than $n$.
While in general there is no sensible way of defining a tensor out of the lower-order coefficients, this can be done using the connection on $\Lqnt$ and the Levi-Civita connection on $\ModSp$.

Let $\psi$ be a section of $\Lqnt^{k}$: then $\nabla \psi$ is a section of $\Tan^{*}\!\ModSp \otimes \Lqnt^{k}$, and by iteration one can obtain a section $\nabla^n \psi$ of $(\Tan^{*} \! \ModSp)^{\otimes n} \otimes \Lqnt^{k}$.
Given an $n$-contra-variant tensor field $T$ on $\ModSp$, it makes then sense to contract it with the $n$ indices of $\nabla^n \psi$, thus obtaining a new section of $\Lqnt^{k}$, which we call $\nabla^n_T \psi$.
The operator $\nabla^n _T$ is tensorial in $T$ and differential of order $n$ in $\psi$, with symbol $\mathcal{S} (T)$, the totally symmetric part of $T$.
In coordinates, it is written as
\begin{equation*}
	\nabla^n_T \psi = T^{\mu_1 \dots \mu_n} \nabla_{\mu_1} \dots \nabla_{\mu_n} \psi \, .
\end{equation*}


One can think of this construction as a right inverse of $\sigma_n$: if $T$ is totally symmetric, then the symbol of $\nabla^n_T$ is $T$ itself.
Instead, given $D$ of order $n$, the operator $\nabla^n_{\sigma_{n} (D)}$ may very well be different from $D$ itself.
However, $D - \nabla^n_{\sigma_{n} (D)}$ is a differential operator of order at most $n$, but since its $n$-th symbol vanishes its order is actually strictly lower.
This motivates the following definition.

\begin{Definition}
	If $D$ is a differential operator of finite order $m > n$ on $E$, its $n$-th symbol is defined recursively as
	\begin{equation*}
		\sigma_{n}(D) := \sigma_n \Bigl( D - \sum_{j = n+1}^{m} \nabla^{j}_{\sigma_{j} (D)} \Bigr) \, .
	\end{equation*}
	We call the total symbol of $D$ the formal sum of all its symbols.
\end{Definition}

It follows from the definition that if $D$ has order $m$ then
\begin{equation*}
	D = \sum_{n=0}^{m} \nabla^n_{\sigma_n(D)} = \sum_{n = 0}^{\infty} \nabla^{n}_{\sigma_{n} (D)} \, .
\end{equation*}
Consequently, such an operator is completely determined by its symbols.

\subsection*{Section- and operator-valued forms on \texorpdfstring{$\Teich$}{Teich}}
\label{sec:operator_forms}

In the previous sections, frequent reference is made to forms on $\Teich$ valued in spaces of smooth sections of vector bundles over $\ModSp$ or differential operators acting on them.
While these spaces are infinite-dimensional, and not even necessarily coming with a topology, smoothness of such objects can be made sense of as follows.

\begin{Definition}
	\label{def:operator_forms}
	Suppose that $E \to \ModSp$ is a vector bundle, $\operatorname{Diff}(E)$ the space of finite-order differential operators acting on its sections.
	We say that a map $\psi \colon \Teich \to \smth(\ModSp,E)$ is smooth if it is as a section of the pull-back bundle of $E$ on $\ModSp \times \Teich$.
 	We also say that $D \colon \Teich \to \operatorname{Diff}(E)$ is smooth if, for every $\psi \in \smth(\ModSp,E)$, the map $D \psi \colon \Teich \to \smth(\ModSp,E)$ is smooth.
 	Similarly, we define $p$-differential forms valued in these spaces by pulling back the bundle of $p$-forms on $\Teich$ to $\ModSp \times \Teich$ and considering smooth sections thereof.
\end{Definition}

If $\psi \colon \Teich \to \smth(\ModSp,E)$ is smooth, $V$ a vector tangent to $\Teich$, the derivative $V[\psi]$ makes sense point-wise on $\ModSp$ and defines a smooth section $V[\psi]$ of $E$.
If $D$ is an operator-valued map, its derivative can be defined as
\begin{equation*}
	V[D] \psi := V \bigl[ D \psi \bigr] - D (V[\psi]) \, .
\end{equation*}
This derivative can easily be extended to an exterior differential.

A bracket $[ \cdot \wedge \cdot ]$ can be defined for operator-valued forms by regarding $\operatorname{Diff}(E)$ as a Lie algebra, for which the following rules apply.

\begin{Lemma}
	\label{lemma:twisteddiff}
	Let $\varphi$, $\psi$ and $\rho$ be $\End(E)$-valued differential forms of rank $a$, $b$ and $c$ respectively.
	Then the following hold:
	\begin{equation*}
		\begin{gathered}
			d^{\nabla} [ \varphi \wedge \psi ] = \big[ (d^{\nabla} \varphi) \wedge \psi \big] + (-1)^{a} \big[ \varphi \wedge d^{\nabla} \psi \big] \, ,	\\
			[ \varphi \wedge \psi ] = - (-1)^{ab} [ \psi \wedge \varphi] \, , \\
			(-1)^{ac} \big[ \varphi \wedge [ \psi \wedge \rho ] \big] + (-1)^{ba} \big[ \psi \wedge [ \rho \wedge \varphi ] \big] + (-1)^{cb} \big[ \rho \wedge [ \varphi \wedge \psi ] \big] = 0 \, .
		\end{gathered}
	\end{equation*}
	We shall refer to the last relation as the Jacobi identity.
\end{Lemma}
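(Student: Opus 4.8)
The plan is to reduce all three identities to their pointwise algebraic counterparts by passing to decomposable forms. Since the wedge bracket, the covariant exterior derivative $d^{\nabla}$, and every operation appearing in the statement are bilinear over smooth functions, it suffices to check each identity on forms of the shape $\varphi = \alpha A$, $\psi = \beta B$ and $\rho = \gamma C$, with $\alpha, \beta, \gamma$ scalar forms of degrees $a, b, c$ and $A, B, C$ sections of $\End(E)$. On such forms the bracket factorises as $[\varphi \wedge \psi] = (\alpha \wedge \beta)\,[A,B]$, which cleanly separates the exterior-algebra part from the commutator part; I would carry out the verification in this factorised form throughout.

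With this factorisation the graded antisymmetry is immediate: combining the graded commutativity of scalar forms, $\alpha \wedge \beta = (-1)^{ab}\,\beta \wedge \alpha$, with the antisymmetry $[A,B] = -[B,A]$ of the commutator produces exactly the sign $-(-1)^{ab}$. For the Leibniz rule I would expand $d^{\nabla}\bigl((\alpha\wedge\beta)[A,B]\bigr)$ using the ordinary Leibniz rule for $d$ on scalar forms together with the fact that the connection induced on $\End(E)$ is a derivation of the commutator, $\nabla[A,B] = [\nabla A, B] + [A, \nabla B]$. Expanding the two summands $[d^{\nabla}\varphi \wedge \psi]$ and $(-1)^a[\varphi \wedge d^{\nabla}\psi]$ in the same way, the two sides then match term by term once the degree-one factor $\nabla A$ (respectively $\nabla B$) is commuted past $\beta$, which is precisely what generates the sign $(-1)^a$ in front of the second summand.

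The Jacobi identity is the sign-sensitive step and the one I expect to demand the most care. After factorising each double bracket, the three terms carry the scalar factors $\alpha\wedge\beta\wedge\gamma$, $\beta\wedge\gamma\wedge\alpha$ and $\gamma\wedge\alpha\wedge\beta$ together with the nested commutators $[A,[B,C]]$, $[B,[C,A]]$ and $[C,[A,B]]$. I would rewrite the last two form factors in the reference order $\alpha\wedge\beta\wedge\gamma$ by graded commutativity; the permutation signs $(-1)^{a(b+c)}$ and $(-1)^{c(a+b)}$ so produced combine with the prescribed prefactors $(-1)^{ba}$ and $(-1)^{cb}$ to give the common factor $(-1)^{ac}$ in all three terms. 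What survives is $(-1)^{ac}(\alpha\wedge\beta\wedge\gamma)$ times the cyclic sum $[A,[B,C]] + [B,[C,A]] + [C,[A,B]]$, which vanishes by the classical Jacobi identity for the commutator on $\End(E)$.

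Finally I would remark that the argument uses only that $d^{\nabla}$ is a derivation of the bracket and that the commutator is antisymmetric and satisfies Jacobi; it therefore applies verbatim when $\End(E)$ is replaced by $\operatorname{Diff}(E)$ and $d^{\nabla}$ by the exterior differential determined by $V[D]\psi = V[D\psi] - D(V[\psi])$, which is the form in which the lemma is actually invoked in the body of the paper. The only genuine obstacle is the bookkeeping: ensuring that the three independently permuted form factors land on exactly the cyclic rotation of bracket arguments required for the classical Jacobi identity to close.
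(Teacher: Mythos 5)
Your proof is correct; note that the paper itself states this lemma without proof --- these are the classical identities for forms with values in a bundle of Lie algebras, of the kind found in standard references such as Kobayashi--Nomizu --- so there is no paper argument to compare against, and yours is the standard one. The sign bookkeeping in your Jacobi computation checks out: the permutation signs $(-1)^{a(b+c)}$ and $(-1)^{c(a+b)}$ combine with the prefactors $(-1)^{ba}$ and $(-1)^{cb}$ to give the common factor $(-1)^{ac}$, leaving the classical cyclic sum $[A,[B,C]]+[B,[C,A]]+[C,[A,B]]=0$. One small point of rigour: for the Leibniz identity the reduction to decomposables cannot be justified by ``bilinearity over smooth functions,'' because $d^{\nabla}$ is only $\R$-linear, not $\smth$-linear. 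What actually saves the reduction is locality plus additivity: locally every $\End(E)$-valued form is a finite sum $\sum_i \alpha_i \otimes A_i$, both sides of the identity are additive and are local operators, so it suffices to check each decomposable summand --- and your computation there is right, with the term $(-1)^{a}\alpha\wedge\nabla A$ from $d^{\nabla}\varphi$ producing, after commuting the degree-one factor $\nabla A$ past $\beta$, exactly the term $(-1)^{a+b}(\alpha\wedge\beta)\wedge[\nabla A,B]$ required by $\nabla[A,B]=[\nabla A,B]+[A,\nabla B]$. (For the pointwise identities, graded antisymmetry and Jacobi, $\smth$-bilinearity is unproblematic.) Your closing remark is also exactly the right one for this paper: the lemma is invoked in the body for $\operatorname{Diff}(E)$-valued forms on $\Teich$ with the exterior differential determined by $V[D]\psi = V[D\psi] - D(V[\psi])$, and the argument transfers verbatim because that differential is a derivation of composition, hence of the commutator, while the pointwise identities need nothing beyond antisymmetry and Jacobi for $[\cdot,\cdot]$.
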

		

\phantomsection
\addcontentsline{toc}{section}{\refname}

\end{document}